
\documentclass{birkjour}
\usepackage{amsfonts,amssymb}
\usepackage{xcolor}
%
 \newtheorem{thm}{Theorem}[section]
 \newtheorem{cor}{Corollary}[thm]
 
\newtheorem*{thrm}{Theorem}

\newtheorem*{lemm}{Lemma}
\newtheorem{conj}{Conjecture}
 
 \theoremstyle{definition}
 \newtheorem{defn}[thm]{Definition}

 \theoremstyle{remark}
\newtheorem{exmp}{Example}
\newtheorem{rem}{Remark}
 \numberwithin{equation}{section}

\begin{document}

%
%
%
%
%
%
%
%
%

\title[Denjoy-Wolff like set for rational semigroups]
 {Denjoy-Wolff like set for rational semigroups}

\author[S. Chatterjee]{Subham Chatterjee}

\address{%
Department of Mathematics,\\ Sidho-Kanho-Birsha University,\\
Purulia, 723104, West Bengal, India.}

\email{subham.acad.res@gmail.com}

\author[G. Chakraborty]{Gorachand Chakraborty}
\address{Department of Mathematics,\\ Sidho-Kanho-Birsha University,\\
Purulia, 723104, West Bengal, India.}
\email{Corresponding author: gorachand.chakraborty@skbu.ac.in}

\author[T. K. Chakra]{Tarun Kumar Chakra}
\address{Department of Mathematics,\\ Indian Institute of Technology
Madras,\\ Chennai, 600036, Tamil Nadu, India.}
\email{tarunchakra1987@gmail.com}
\subjclass{Primary 37F10; Secondary 30D30 , 37F44}

\keywords{Rational semigroup, Fatou set, Julia
set, Denjoy-Wolff theorem, Denjoy-Wolff point}

\date{ }

\begin{abstract}
In this paper, we introduce the concept of Denjoy-Wolff set in rational semigroups. We show that for finitely generated Abelian rational semigroups, the Denjoy-Wolff like set is countable. Some results concerning the Denjoy-Wolff like set and the Julia set are also discussed. Then we consider a special class of rational semigroups and discuss various properties of Denjoy-Wolff like set for this class. We use the concept of Denjoy-Wolff like set to classify the class into 3 sub-classes. We also show that for any semigroup in this class, the semigroup can be partitioned into $k$ partitions where $k$ is the cardinality of the Denjoy-Wolff like set. 
\end{abstract}

\maketitle
\section{Introduction}
Let $f:\widehat{\mathbb{C}}\rightarrow \widehat{\mathbb{C}}$ be a rational function. By the dynamics of the function $f$ we mean the behaviour of the sequence $\{f^n\}$ where $f^n=f^{n-1}\circ f$, $n\in\mathbb{N}$ and $f^0$ is the identity function. The Fatou set of a function $f$ is the set $F(f)=\{z\in\widehat{\mathbb{C}}: \{f^n\}\text{ is well-defined and normal in a neighbourhood of } z\}$. The complement of the Fatou set is called the Julia set, denoted by $J(f)$. The Fatou and Julia set of a rational function divides the Riemann sphere into two disjoint completely invariant subsets. For a detailed discussion about the dynamics of rational functions, one can read \cite{beardon2000iteration}. While investigating the dynamics of rational functions, the topic of rational semigroups was introduced in \cite{hinkkanen1996dynamics}. A rational semigroup is a collection of rational maps of degree greater than 1 which is closed under the binary operation of composition. To define rational semigroups symbolically, let $F=\{f_1,f_2,\dots\}$ be a family of rational functions of degree at least $2$. Let $G$ be the collection of all functions $f$ such that $f=f_{i_1}\circ f_{i_2}\circ\dots\circ f_{i_n}$ for some $i_k\in\mathbb{N}$. Then $G$ forms a semigroup under composition of functions and is called a rational semigroup. The family $F$ is called the generating family of $G$ and the semigroup is denoted as $G=<f_1,f_2,\dots>=<F>$. When the generating family $F$ is finite then we say $G$ is a finitely generated rational semigroup and $G$ is called infinitely generated if the generating family is infinite. If $F=\{f_1,f_2,\dots\}$ is a family of permutable rational functions that is $f_i\circ f_j=f_j\circ f_i$ for all $i,j\in\mathbb{N}$ and $G=<F>$ then we say $G$ is an Abelian rational semigroup. By the Fatou set of a rational semigroup $G$, we mean the largest open subset of $\widehat{\mathbb{C}}$ where the family $G$ is well-defined and normal. The Fatou set of a rational semigroup $G$ is denoted by $F(G)$. The complement of the Fatou set of $G$ is called the Julia set of $G$ and is denoted by $J(G)$. The idea of rational semigroups was extended to transcendental entire functions in \cite{poon1998fatou}. It is known from \cite{hinkkanen1996dynamics,poon1998fatou} that $J(G)=\overline{\cup_{g\in G} J(g)}$ for any rational or transcendental semigroup $G$ where $\overline{A}$ denotes the closure of $A$. Recently the concept of omitted value for transcendental semigroups is introduced in \cite{chatterjee2023onomitted}. In this paper, by $A^\circ$ and $A^c$ we mean the set of interior points of $A$ and complement of $A$ respectively. The cardinality of a set $A$ is denoted as $card(A)$ or $\lvert A\rvert$. Throughout this paper, we denote the open unit disk and the unit circle centered at the origin by $\mathbb{D}$ and $\partial \mathbb{D}$ respectively. A set $B$ is said to be forward invariant for a rational or transcendental semigroup $G$ if $f(B)\subset B$ for all $f\in G$. Similarly, a set $B$ is said to be backward invariant for a rational or transcendental semigroup $G$ if $B\subset f(B)$ for all $f\in G$. It is known that the Fatou set is forward invariant whereas the Julia set is backward invariant for rational semigroups. A set that is both forward and backward invariant is called a completely invariant set. Some results on completely invariant Julia sets in polynomial semigroups can be found in \cite{stankewitz1999completely}. \\
M\"{o}bius maps are the rational maps of the form $T(z)=\frac{az+b}{cz+d},\text{ where }a,b,c,d\in\mathbb{C},ad-bc\neq 0$ and $z\in\widehat{\mathbb{C}}$. It is known that non-identity M\"{o}bius maps can be classified into $3$ types. A M\"{o}bius map $T$ is called parabolic if $T(z)$ is conjugate to $f(z)=z+\beta$ $\beta\neq 0$ and has only one fixed point. M\"{o}bius map $T$ is called elliptic if $T(z)$ is conjugate to $f(z)=\alpha z$ with $\rvert \alpha\lvert=1$ and has two fixed points. A M\"{o}bius map is called loxodromic in the other cases. M\"{o}bius map $T$ is called hyperbolic if $T(z)$ is conjugate to $f(z)=\alpha z$ with $\alpha\in (0,1)\cup (1,\infty)$. One of the most well-known results in complex analysis and complex dynamics is the Denjoy-Wolff theorem stated below. It shows the existence of a unique point in $\overline{\mathbb{D}}$ to which all the orbits of points in the unit disk converge for a particular type of holomorphic self-maps of the unit disk. 
\begin{thrm}[Denjoy-Wolff theorem]
Let $f:\mathbb{D}\rightarrow\mathbb{D}$ be analytic and assume that $f$ is neither an elliptic M\"{o}bius map nor the identity. Then there exists a unique point $z_0\in\overline{\mathbb{D}}$ such that $f^n(z)\rightarrow z_0$ as $n\rightarrow \infty$ uniformly on compact subsets of $\mathbb{D}$.
\end{thrm}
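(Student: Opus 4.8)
The plan is to treat two mutually exclusive cases according to whether $f$ possesses a fixed point inside $\mathbb{D}$, after first recording that the iterate family is automatically normal. Since every $f^n$ maps $\mathbb{D}$ into $\mathbb{D}$, the family $\{f^n\}$ omits every value outside $\overline{\mathbb{D}}$, so by Montel's theorem it is normal on $\mathbb{D}$; hence every subsequence of $\{f^n\}$ has a locally uniformly convergent subsequence whose limit is a holomorphic map $\mathbb{D}\to\overline{\mathbb{D}}$. The whole difficulty is then to show that every such subsequential limit is one and the same constant $z_0$, for then normality forces $f^n\to z_0$ locally uniformly and $z_0$ is manifestly unique.

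\textbf{Case 1 (interior fixed point).} Suppose $f(p)=p$ for some $p\in\mathbb{D}$, and let $\rho$ denote the hyperbolic distance on $\mathbb{D}$. By the Schwarz--Pick lemma $\rho(f(z),f(w))\le\rho(z,w)$ for all $z,w$, with equality for some distinct pair only when $f$ is a disk automorphism. An automorphism fixing an interior point is conjugate (by a M\"{o}bius map sending $p$ to $0$) to a rotation $z\mapsto\alpha z$, hence is either the identity or an elliptic M\"{o}bius map, both of which are excluded by hypothesis. Therefore $f$ is a strict hyperbolic contraction, and I would show $\rho(f^n(z),p)=\rho(f^n(z),f^n(p))\to 0$ with a rate uniform on each hyperbolic ball, giving $f^n\to p$ uniformly on compacta and $z_0=p$.

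\textbf{Case 2 (no interior fixed point).} Here I would carry out Wolff's construction. For $0<r<1$ set $f_r=r\,f$; since $f(r\overline{\mathbb{D}})$ is a compact subset of $\mathbb{D}$ we have $f_r(r\overline{\mathbb{D}})\subseteq r\overline{\mathbb{D}}$, so by the Brouwer fixed point theorem $f_r$ has a fixed point $p_r\in r\overline{\mathbb{D}}\subset\mathbb{D}$. Choosing $r_k\uparrow 1$ and passing to a subsequence, $p_{r_k}\to\zeta\in\overline{\mathbb{D}}$; this limit cannot lie in $\mathbb{D}$ (else it would be a fixed point of $f$), so $\zeta\in\partial\mathbb{D}$. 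The central step is Wolff's horocycle lemma: writing
\[
H(\zeta,R)=\left\{z\in\mathbb{D}:\frac{|\zeta-z|^2}{1-|z|^2}<R\right\},
\]
one has $f(H(\zeta,R))\subseteq H(\zeta,R)$ for every $R>0$. I would obtain this by applying Schwarz--Pick to $f_{r_k}$ at its fixed point $p_{r_k}$, rewriting the resulting pseudo-hyperbolic inequality, and letting $k\to\infty$ using $|p_{r_k}|\to 1$ and $f_{r_k}\to f$ locally uniformly.

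\textbf{Concluding step and main obstacle.} Iterating the horocycle lemma yields $f^n(H(\zeta,R))\subseteq H(\zeta,R)$ for every $n$ and every $R>0$, so each orbit is confined to a horocycle internally tangent to $\partial\mathbb{D}$ at $\zeta$. Let $g$ be any locally uniform subsequential limit of $\{f^n\}$. Using the standard dichotomy for limit functions of iterates, together with the hypotheses that $f$ has no interior fixed point in this case and is not conjugate to a rotation, I would show that $g$ cannot take an interior value, so $g(\mathbb{D})\subseteq\partial\mathbb{D}$ and hence $g$ is constant by the open mapping theorem. Because the orbit lies in $H(\zeta,R)$, whose closure meets $\partial\mathbb{D}$ only at $\zeta$, this constant must equal $\zeta$. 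Therefore every subsequential limit equals $\zeta$, and normality upgrades this to $f^n\to\zeta$ locally uniformly, with uniqueness of $z_0=\zeta$ immediate. The genuine obstacle, I expect, is Wolff's horocycle lemma itself: the delicate limiting passage from the Schwarz--Pick inequality for $f_{r_k}$ at $p_{r_k}$ to the horocyclic inequality as $|p_{r_k}|\to 1$, together with the accompanying argument that excludes non-constant subsequential limits in the fixed-point-free case.
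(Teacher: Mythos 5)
The paper does not prove this statement: it is quoted as the classical Denjoy--Wolff theorem, stated as background with references, so there is no ``paper proof'' to compare against. Judged on its own terms, your outline is the standard Wolff--Denjoy argument (normality via Montel; the interior-fixed-point case via Schwarz--Pick/Schwarz at the fixed point; the fixed-point-free case via the approximate fixed points $p_{r_k}$ of $r_k f$, Wolff's invariant horodisks at the cluster point $\zeta\in\partial\mathbb{D}$, and the exclusion of interior subsequential limit values), and all the main steps are correctly identified and in the right order.

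One point in Case 2 needs more care than your sketch acknowledges. The hypotheses exclude only the identity and elliptic M\"{o}bius maps, so $f$ may be a hyperbolic or parabolic automorphism of $\mathbb{D}$: such maps have no interior fixed point, yet Schwarz--Pick holds with \emph{equality} everywhere for them. Your concluding step rules out interior values of a subsequential limit $g$ by an argument that, in its usual form, runs through strict contraction of the hyperbolic metric (if $f^{n_k}(z_0)\to w\in\mathbb{D}$, the non-increasing sequence $\rho(f^n(z_0),f^{n+1}(z_0))$ has a limit $d$ satisfying $d=\rho(w,f(w))=\rho(f(w),f^2(w))$, and strictness forces $f(w)=w$, a contradiction). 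That step simply does not apply to automorphisms, and invoking ``the standard dichotomy for limit functions of iterates'' at this point comes close to assuming the conclusion. The fix is routine --- treat fixed-point-free automorphisms separately by conjugating to the half-plane, where hyperbolic maps become $z\mapsto\lambda z$ ($\lambda>0$, $\lambda\neq1$) and parabolic ones $z\mapsto z+\beta$, and convergence to the boundary fixed point is explicit --- but it must be said; as written the case split ``strict contraction versus excluded rotation'' silently drops these maps. With that case added, the proof is complete and is the standard one found in the literature the paper cites.
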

Denjoy-Wolff theorem is very useful in describing the behaviour of iterations of holomorphic self-maps of the unit disk. For more details on this, one can read \cite{benini2024boundary,bracci2020continuous}. For non-rotational holomorphic self-maps of $\mathbb{D}$, this convergence can even be extended to the boundary almost everywhere under certain assumptions \cite{poggi2010pointwise}. Let $f:\mathbb{D}\rightarrow\mathbb{D}$ be a conformal map. 
The set of all conformal maps $f:\mathbb{D}\rightarrow\mathbb{D}$ is the group of M\"{o}bius maps given by $f(z)=e^{i\alpha}\frac{z-a}{1-\overline{a}z}, \alpha\in\mathbb{R}, \lvert a\rvert<1$. \\
Our goal in this paper is to generalize the aspects of the Denjoy-Wolff theorem in the study of rational semigroups. For this purpose, we define a set consisting of all Denjoy-Wolff points for the rational semigroup. 
\begin{defn}
Let $G=<f_1,f_2,\dots>$ be a rational semigroup. We define the Denjoy-Wolff like set as $DW(G)=\{z^{\prime}\in\overline{\mathbb{D}}:\text{ there exists } f\in G \text{ such that } f^n(z)\rightarrow z^{\prime}\text{ as } n\rightarrow\infty \text{ for all } z\in\mathbb{D}\}$
\end{defn}
In Section \ref{s2}, we provide examples of various rational semigroups and discuss the cardinality of Denjoy-Wolff like set for them. We show that for a finitely generated Abelian rational semigroup, the Denjoy-Wolff like set is at most countably infinite. We also propose a conjecture that the Denjoy-Wolff like set for finitely generated Abelian rational semigroups is either empty or singleton. In Section \ref{s3}, we discuss some properties of Denjoy-Wolff like set for various rational semigroups. In Section \ref{section4}, a special class of rational semigroups is considered. We discuss the structure of the class based on the Denjoy-Wolff like set and  classify it into 3 categories based on the behaviour of their corresponding Denjoy-Wolff like set. Some results about these subclasses are also discussed.

\section{Denjoy-Wolff like set for rational semigroups}\label{s2}
Let $G$ be a rational semigroup. We first provide two examples showing that $DW(G)$ can be empty or finite.
\begin{exmp}\label{example1}
Let $G=<f_1,f_2>$ where $f_1(z)=z^2$ and $f_2(z)=z^3$. Let $f\in G$. So $f(z)=z^{2^n 3^m}$ for some $n,m\in\mathbb{N}$. Then $f^k(z)\rightarrow 0$ as $k\rightarrow \infty$ for any $z\in\mathbb{D}$. So $DW(G)=\{0\}$ as $f$ is chosen arbitrarily.
\end{exmp}
\begin{exmp}\label{example2}
Let $G=<f>$ where $f(z)=z^2+1$. Consider $z=\frac{1}{2}$. We see that $f^n(\frac{1}{2})$ does not converge to any point in $\overline{\mathbb{D}}$. So, $DW(G)$ is empty.  
\end{exmp}
Next, we provide an example of a rational semigroup whose Denjoy-Wolff like set has an infinite number of elements. For this, we first take a family of functions which are simply the composition of a conformal self-map of $\mathbb{D}$ with a polynomial.
\begin{rem}\label{le1}
    If $f(z)=\frac{z^k+a}{1+\overline{a}z^k}$, $\lvert a\rvert<1$, $k\in\mathbb{N}$ then $f(\mathbb{D})\subset \mathbb{D}$. This is obvious as $f=g\circ h$ where $g(z)=\frac{z+a}{1+\overline{a}z}$ and $h(z)=z^k$. Clearly, both $g,h$ map $\mathbb{D}$ to $\mathbb{D}$ and thus $f$ also does the same. By Denjoy-Wolff theorem there exists a $z_0\in\overline{\mathbb{D}}$ such that $f^n(z)\to z_0$ for all $z\in\mathbb{D}$ when $k\geq 2$. Also, notice that the poles of $f$ lie outside the unit disk.
\end{rem}
\begin{thm}\label{le2}
    Let $f(z)=\frac{z^k+a}{1+\overline{a}z^k}$, $0<\lvert a\rvert<1, k\in\mathbb{N}\setminus \{1\}$. Then,\\ 
    i) $\frac{a}{\lvert a\rvert}$ is a fixed point of $f$ if and only if $\frac{a}{\lvert a\rvert}$ is $(k-1)^{th}$ root of unity.\\
    ii) If $\frac{a}{\lvert a\rvert}$ is $(k-1)^{th}$ root of unity, then $\frac{a}{\lvert a\rvert}$ is a parabolic fixed point if and only if $\lvert a\rvert=\frac{k-1}{k+1}$.\\
    iii) If $\frac{a}{\lvert a\rvert}$ is $(k-1)^{th}$ root of unity and $\frac{a}{\lvert a\rvert}$ is a parabolic fixed point, then $f^n(z)\to \frac{a}{\lvert a\rvert}$ for all $z\in\mathbb{D}$ as $n\to\infty$.
\end{thm}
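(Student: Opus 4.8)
The plan is to dispatch (i) and (ii) by direct computation and to handle (iii) by combining the Denjoy-Wolff theorem with an explicit identification of the limit. Throughout I write $\zeta=\frac{a}{\lvert a\rvert}$ and $r=\lvert a\rvert\in(0,1)$, so that $a=r\zeta$, $\overline{a}=r/\zeta$ and, since $\lvert\zeta\rvert=1$, $\overline{\zeta}=1/\zeta$. For (i) I set $f(\zeta)=\zeta$ and clear denominators. Using $\overline{a}\zeta^k=(r/\zeta)\zeta^k=r\zeta^{k-1}$, the fixed-point equation $\zeta^k+a=\zeta\left(1+\overline{a}\zeta^k\right)$ becomes $\zeta^k+r\zeta=\zeta+r\zeta^k$, i.e. $(1-r)(\zeta^k-\zeta)=0$. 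As $r\neq 1$ and $\zeta\neq 0$, this is equivalent to $\zeta^{k-1}=1$, which is exactly the assertion.

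For (ii) I compute the multiplier. By the quotient rule the numerator of $f'$ collapses because the cross terms cancel: $k z^{k-1}\left[(1+\overline{a}z^k)-\overline{a}(z^k+a)\right]=kz^{k-1}(1-\lvert a\rvert^2)$, so $f'(z)=\frac{kz^{k-1}(1-\lvert a\rvert^2)}{(1+\overline{a}z^k)^2}$. Evaluating at the fixed point $\zeta$, where $\zeta^{k-1}=1$ and $\overline{a}\zeta^k=r$, gives $f'(\zeta)=\frac{k(1-r^2)}{(1+r)^2}=\frac{k(1-r)}{1+r}$. This quantity is real and positive; since a fixed point is parabolic exactly when its multiplier is a root of unity and the only positive real root of unity is $1$, the point $\zeta$ is parabolic if and only if $f'(\zeta)=1$, i.e. $k(1-r)=1+r$, which rearranges to $r=\frac{k-1}{k+1}$.

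For (iii) note that by Remark \ref{le1} we have $f(\mathbb{D})\subset\mathbb{D}$, and $f$ is neither an elliptic M\"{o}bius map nor the identity (it has degree $k\geq 2$), so the Denjoy-Wolff theorem furnishes a unique $z_0\in\overline{\mathbb{D}}$ with $f^n\to z_0$ locally uniformly; it remains to identify $z_0=\zeta$. The key observation is that the radial segment $\{t\zeta:0\le t<1\}$ is forward invariant: using $\zeta^k=\zeta$ and $\overline{\zeta}\zeta^k=1$ one finds $f(t\zeta)=\zeta\,\phi(t)$ with $\phi(t)=\frac{t^k+r}{1+rt^k}\in[0,1)$. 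Thus on this segment $f$ is conjugate to the real one-dimensional map $\phi:[0,1)\to[0,1)$. Under the parabolic hypothesis $r=\frac{k-1}{k+1}$ I will check that $\phi(t)>t$ on $[0,1)$ — for $k=2$ this is the clean identity $\phi(t)-t=\frac{-(t-1)^3}{t^2+3}>0$ — so each orbit $\phi^n(t)$ increases monotonically to the unique fixed point $1$. Hence $f^n(t\zeta)\to\zeta$, and by uniqueness of the Denjoy-Wolff point $z_0=\zeta$, yielding $f^n(z)\to\zeta$ for all $z\in\mathbb{D}$.

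I expect the main obstacle to be precisely the identification $z_0=\zeta$ in (iii): establishing $\phi(t)>t$ on $[0,1)$ for general $k$, equivalently that $N(t)=t^k+r-t-rt^{k+1}$ is positive there, is delicate because $t=1$ is a double root of $N$ (reflecting $f'(\zeta)=1$) and one must control the sign of the remaining factor after dividing out $(t-1)^2$. An alternative that sidesteps this computation is to invoke the Julia--Wolff--Carath\'eodory characterization of the Denjoy-Wolff point as the unique boundary fixed point with angular derivative $\le 1$: since $f'(\zeta)=1$ while every non--Denjoy-Wolff boundary fixed point has angular derivative strictly greater than $1$, one concludes immediately that $z_0=\zeta$.
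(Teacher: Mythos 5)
Your parts (i) and (ii) coincide with the paper's proof: both are the same direct computations of the fixed-point equation and of $f'(z)=\frac{kz^{k-1}(1-\lvert a\rvert^2)}{(1+\overline{a}z^k)^2}$, giving the multiplier $k\frac{1-\lvert a\rvert}{1+\lvert a\rvert}$ at $\zeta=a/\lvert a\rvert$. (Your chain of equivalences in (i) handles both directions at once, and your observation that the multiplier is a positive real number, so that ``root of unity'' forces it to equal $1$, is a point the paper passes over silently.) Part (iii) is where you genuinely diverge. The paper works inside Fatou--Julia theory: $\zeta$ parabolic puts $\zeta\in\partial\mathbb{D}\cap J(f)$, Denjoy--Wolff puts $\mathbb{D}$ in a single invariant Fatou component $U$ with $\zeta\in\partial U$, and one then invokes the classification of invariant components to declare $U$ a parabolic basin of $\zeta$. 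Your two routes avoid that machinery. The Julia--Wolff--Carath\'eodory route is the cleanest and is complete as stated: the poles of $f$ lie outside $\overline{\mathbb{D}}$, so the angular derivative at $\zeta$ is the honest derivative $f'(\zeta)=1$, and a boundary fixed point with (angular) derivative $\le 1$ must be the Denjoy--Wolff point. Your elementary route via the invariant radius is also correct, and the one step you flag as open --- $\phi(t)>t$ on $[0,1)$ for general $k$, where $\phi(t)=\frac{t^k+r}{1+rt^k}$ and $r=\frac{k-1}{k+1}$ --- does hold and closes in two lines: writing
\[
(k+1)\bigl(\phi(t)-t\bigr)\bigl(1+rt^k\bigr)=P(t):=(k-1)-(k+1)t+(k+1)t^k-(k-1)t^{k+1},
\]
one checks $P(1)=P'(1)=0$ and $P''(t)=k(k^2-1)t^{k-2}(1-t)>0$ on $(0,1)$, so $P'$ increases to $P'(1)=0$, hence $P'<0$ and $P$ decreases to $P(1)=0$ on $(0,1)$, giving $P>0$ on $[0,1)$. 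With that supplied, the monotone orbit $\phi^n(t)\uparrow 1$ identifies the limit as $\zeta$, and uniqueness of the Denjoy--Wolff limit finishes the proof. In short: same proof for (i)--(ii); for (iii) your argument is more elementary and self-contained than the paper's (which leans on the classification of periodic Fatou components), at the cost of either the small real-variable estimate above or an appeal to Julia--Wolff--Carath\'eodory.
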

\begin{proof}
    \textit{i)} First let us assume $\frac{a}{\lvert a\rvert}$ is a fixed point of $f$. Now, $f(z)=z$ implies $z^k=\frac{z-a}{1-\overline{a}z}$. As $\frac{a}{\lvert a\rvert}$ is a fixed point of $f$, so $\frac{a^k}{\lvert a\rvert^k}=\frac{\frac{a}{\lvert a\rvert}-a}{1-\overline{a}\frac{a}{\lvert a\rvert}}=\frac{a}{\lvert a\rvert}$, as $1-\lvert a\rvert>0$. So $\frac{a}{\lvert a\rvert}\Bigl(\bigl(\frac{a}{\lvert a\rvert}\bigl)^{k-1}-1\Bigl)=0$. As $\frac{a}{\lvert a\rvert}\neq 0$, $\Bigl(\frac{a}{\lvert a\rvert}\Bigl)^{k-1}=1$. Hence, $\frac{a}{\lvert a\rvert}$ is $(k-1)^{th}$ root of unity. Conversely, if $\frac{a}{\lvert a\rvert}$ is $(k-1)^{th}$ root of unity then by minor calculation we have $f\Bigl(\frac{a}{\lvert a\rvert}\Bigl)=\frac{a}{\lvert a\rvert}$. \\
    \textit{ii)} We have, $\frac{a}{\lvert a\rvert}$ is a fixed point of $f$. Now, $f^{\prime}(z)=\frac{kz^{k-1}(1-\lvert a\rvert^2)}{(1+\overline{a}z^k)^2}$. So, $f^{\prime}\Bigl(\frac{a}{\lvert a\rvert}\Bigl)=k\frac{1-\lvert a\rvert}{1+\lvert a\rvert}$ as $\frac{a}{\lvert a\rvert}$ is $(k-1)^{th}$ root of unity. Now, $\frac{a}{\lvert a\rvert}$ is a parabolic fixed point if and only if $f^{\prime}\Bigl(\frac{a}{\lvert a\rvert}\Bigl)=1$. Thus $\frac{a}{\lvert a\rvert}$ is a parabolic fixed point if and only if $\lvert a\rvert=\frac{k-1}{k+1}$.\\
    \textit{iii)} As $\frac{a}{\lvert a\rvert}$ is a parabolic fixed point of $f$, so $\frac{a}{\lvert a\rvert}\in \partial \mathbb{D}\cap J(f)$. Also, as $f$ satisfies conditions of the Denjoy-Wolff theorem so $f^n(z)\to z_0\in\overline{\mathbb{D}}$. Thus $\mathbb{D}\subset F(f)$. Let $\mathbb{D}\subset U$, where $U$ is a Fatou component. As $\frac{a}{\lvert a\rvert}\in\partial \mathbb{D}\cap J(f)$, $\partial\mathbb{D}$ is not subset of $U$. So, $\frac{a}{\lvert a\rvert}\in\partial \mathbb{D}\cap \partial U$. So, $U$ is a Fatou component that contains a parabolic fixed point in its boundary. So $U$ is a parabolic basin such that $f^n(z)\to \frac{a}{\lvert a\rvert}$ for all $z\in U$ as $n\to\infty$. Hence $f^n(z)\to \frac{a}{\lvert a\rvert}$ for all $z\in\mathbb{D}$ as $n\to\infty$ as $\mathbb{D}\subset U$.
\end{proof}

\begin{rem}\label{reme4}
    We can always choose a sequence of distinct elements $\{a_k\}$ such that $f_k(z)=\frac{z^k+a_k}{1+\overline{a_k}z^k}$ satisfies conditions of Theorem \ref{le2} \textit{(iii)}. For any $k\geq 2$, we choose $a_k$ such that $a_k=\frac{k-1}{k+1}\alpha_{k-1}$ such that $\alpha_{k-1}$ is $(k-1)^{th}$ root of unity which is not a $m$-th root of unity for any $m<(k-1)$. Then $\lvert a_k\rvert = \frac{k-1}{k+1}<1$. Now $\frac{a_k}{\lvert a_k\rvert}=\alpha_{k-1}$ which satisfies all the conditions. 
\end{rem}
\begin{exmp}
    For $k=3$, choose $a=\frac{1}{2}$, then $\frac{a}{\lvert a\rvert}=1$ which is a square root of unity. So, for $f(z)=\frac{z^3+\frac{1}{2}}{1+\frac{1}{2}z^3}=\frac{2z^3+1}{2+z^3}$, $f^n(z)\to 1$ for all $z\in\mathbb{D}$ as $n\to\infty$.
\end{exmp}
\begin{exmp}\label{example4}
    Let $F=\{f_2,f_3,....\}$ such that $f_k$, $k\geq 2$ is chosen as described in Remark \ref{reme4}. Define $G=<F>$. Then for every $k$, $f_{k}^n(z)\to \frac{a_k}{\lvert a_k\rvert}$ for all $z\in\mathbb{D}$ as $n\to \infty$. So, $\Bigl\{\frac{a_k}{\lvert a_k\rvert}: k\geq 2\Bigl\}\subset DW(G)$. As $\Bigl\{\frac{a_k}{\lvert a_k\rvert}: k\geq 2\Bigl\}$ is an infinite set. So $DW(G)$ is infinite. 
\end{exmp}
We now prove a theorem related to the cardinality of the set $DW(G)$ for a finitely generated Abelian rational semigroup $G$.
\begin{thm}\label{theorem1}
Let $G$ be a finitely generated Abelian rational semigroup. Then $DW(G)$ is countable.
\end{thm}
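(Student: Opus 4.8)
The plan is to reduce the statement to the fact that the semigroup $G$ itself is countable, after which the conclusion follows formally. The key structural input is the Abelian hypothesis: since $G=\langle f_1,\dots,f_N\rangle$ is finitely generated and all generators commute, every element of $G$, which a priori is an arbitrary finite word $f_{i_1}\circ\cdots\circ f_{i_n}$, can be rearranged into a normal form $f_1^{m_1}\circ f_2^{m_2}\circ\cdots\circ f_N^{m_N}$ with $(m_1,\dots,m_N)\in\mathbb{N}^N$ and $\sum_j m_j\geq 1$. Thus the assignment $(m_1,\dots,m_N)\mapsto f_1^{m_1}\circ\cdots\circ f_N^{m_N}$ is a surjection from the countable set $\mathbb{N}^N\setminus\{0\}$ onto $G$, so $G$ is at most countable.

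Next I would observe that each $f\in G$ contributes \emph{at most one} point to $DW(G)$. Indeed, by the definition of the Denjoy-Wolff like set, a point $z'\in DW(G)$ is witnessed by some $f\in G$ with $f^n(z)\to z'$ for all $z\in\mathbb{D}$; but then, fixing any single base point $z_0\in\mathbb{D}$, we have $z'=\lim_{n\to\infty}f^n(z_0)$, so the limit $z'$ is uniquely determined by $f$. Consequently there is a well-defined partial map $\Phi$ from $G$ into $\overline{\mathbb{D}}$ sending each $f$ (for which such a limit exists) to its associated limit point, and by construction the image of $\Phi$ is precisely $DW(G)$.

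Combining the two steps finishes the argument: $DW(G)$ is the image of a subset of the countable set $G$ under $\Phi$, and the image of a countable set under any map is countable, so $DW(G)$ is countable.

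There is no genuine analytic obstacle here; the Denjoy-Wolff theorem enters only implicitly, through the requirement that the witnessing limit exist, and is not needed to bound the cardinality. The two points that must be handled carefully are (i) the normal-form reduction, which is exactly where the Abelian hypothesis is used to collapse words into tuples in $\mathbb{N}^N$, and (ii) the single-valuedness of $\Phi$, i.e.\ that a fixed $f$ cannot force two distinct points into $DW(G)$. I would also remark that the finiteness of the generating family is the essential hypothesis for countability of $G$: even without commutativity, the set of finite words over a finite alphabet is countable, so $G$, and hence $DW(G)$, is countable for any finitely generated rational semigroup; the Abelian assumption merely yields the cleaner $\mathbb{N}^N$ parametrization that is convenient for the later structural results.
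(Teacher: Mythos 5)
Your proposal is correct and follows essentially the same route as the paper: first establish that $G$ is countable via the normal form $f_1^{m_1}\circ\cdots\circ f_N^{m_N}$, then observe that $DW(G)$ is the image of a subset of $G$ under the map sending $f$ to its unique limit point, hence countable. Your version is in fact slightly cleaner, since you only claim a surjection from $\mathbb{N}^N\setminus\{0\}$ onto $G$ (avoiding the paper's unnecessary and delicate claim that this correspondence is a bijection), and your closing remark that finite generation alone, without commutativity, already yields countability of $G$ and hence of $DW(G)$ is accurate.
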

\begin{proof}
First, we show that $G$ is countable. Let $G=<f_1,f_2,\dots,f_n>$. Now any $f\in G$ can be written as $f=f{_1}^{m_1}\circ f{_2}^{m_2}\circ\dots\circ f{_n}^{m_n}$ where $m_i\in\mathbb{N}\cup\{0\}$ for all $i=1,2,\dots,n$ as $G$ is Abelian. Define $\tau:G\rightarrow\{\mathbb{N}\cup\{0\}\}^n$ such that $\tau(f)=(m_1,m_2,\dots,m_n)$ where $f=f{_1}^{m_1}\circ f{_2}^{m_2}\circ\dots\circ f{_n}^{m_n}$. We now show that the function is well-defined. Let $f=g$. So $f=f{_1}^{m_1}\circ f{_2}^{m_2}\circ\dots\circ f{_n}^{m_n}=g$. Thus, $\tau(f)=\tau(g)=(m_1,m_2,\dots,m_n)$. So the function is well-defined.
Now let us assume that $f=f{_1}^{m_1}\circ f{_2}^{m_2}\circ \dots\circ f{_n}^{m_n}$, $g=f{_1}^{k_1}\circ f{_2}^{k_2}\circ\dots\circ f{_n}^{k_n}$ and  $\tau(f)=\tau(g)$. Now $\tau(f)=\tau(g)$ implies that $(m_1,m_2,\dots,m_n)=(k_1,k_2,\dots,k_n)$. So $m_i=k_i$ for all $i=1,2,\dots,n$. This implies $f=f{_1}^{m_1}\circ f{_2}^{m_2}\circ\dots\circ f{_n}^{m_n}=f{_1}^{k_1}\circ f{_2}^{k_2}\circ\dots\circ f{_n}^{k_n}=g$. So $\tau$ is one-one function. Let $(m_1,m_2,\dots,m_n)\in \{\mathbb{N}\cup\{0\}\}^n$. Now we have $f=f{_1}^{m_1}\circ f{_2}^{m_2}\circ\dots\circ f{_n}^{m_n}\in G$ such that $\tau(f)=(m_1,m_2,\dots,m_n)$. Thus, $\tau$ is also surjective and hence a bijective function. So $card(G)=card(\{\mathbb{N}\cup\{0\}\}^n)$. As finite product of countable sets is countable and $\mathbb{N}\cup\{0\}$ is countable, so $\{\mathbb{N}\cup\{0\}\}^n$ is countable. So $G$ is countable.
Now we show that $DW(G)$ cannot be uncountable for finitely generated Abelian rational semigroups.
If for all $f\in G$, $f^n(\mathbb{D})$ does not converge to a point in $\overline{\mathbb{D}}$ then $DW(G)$ is empty by definition and hence has cardinality $0$. Let us assume that $G$ has a non-empty subset $S$ which contains all $f\in G$ such that $f^n(\mathbb{D})$ converges to a point in $\overline{\mathbb{D}}$. Now for any $f\in S$ let us assume that $z_f$ is the point in $\overline{\mathbb{D}}$ to which $f^n(z)$ converges for all $z\in\mathbb{D}$. Define $\zeta:S\to DW(G)$ by $\zeta(f)=z_f$. The function is well-defined by definition. Let $z^{\prime}\in DW(G)$. Then by definition of Denjoy-Wolff like set there exists a function $f^{\prime}\in G$ such that $f^{{\prime}^n}(z)\rightarrow z^{\prime}$ as $n\rightarrow\infty$ for all $z\in\mathbb{D}$. Hence, by definition of $S$, we have $f^{\prime}\in S$. So, for any point in the co-domain of $\zeta$ there exists a $\zeta-$preimage of the point. So, $\zeta$ is surjective. Hence $card(DW(G))\leq card(S) \leq card(G)$. We know that $G$ is countable. So, Denjoy-Wolff like set of $G$ can be at most countable.\\
\end{proof}
We propose the following conjecture which if proven could strengthen the above result.
\begin{conj}
Let $G$ be a finitely generated Abelian rational semigroup. Then $DW(G)$ is empty or singleton.
\end{conj}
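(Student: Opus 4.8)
The plan is to argue by contradiction: suppose $DW(G)$ contains two distinct points $z_1'\neq z_2'$, realised by elements $f,g\in G$ with $f^n(z)\to z_1'$ and $g^n(z)\to z_2'$ for all $z\in\mathbb{D}$. Since $G$ is Abelian, $f$ and $g$ commute, and I would invoke the classical theorem that commuting rational maps of degree at least $2$ share their Fatou and Julia sets (see, e.g., \cite{beardon2000iteration}); call the common Fatou set $F$. Because $f^n\to z_1'$ on $\mathbb{D}$, the family $\{f^n\}$ is normal there, so $\mathbb{D}\subset F$; let $U$ be the Fatou component containing $\mathbb{D}$, which is simultaneously a component for $f$ and for $g$. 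By Vitali's theorem the convergence extends from $\mathbb{D}$ to all of $U$, so $f^n\to z_1'$ and $g^n\to z_2'$ uniformly on compact subsets of $U$, and $U$ is forward invariant under both $f$ and $g$ (as the component carrying the relevant attracting or parabolic basin).

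The first key step is to show that each Denjoy--Wolff point is fixed by the other map. For $w\in U$ one has $g(f^n(w))=f^n(g(w))$; letting $n\to\infty$, the left side tends to $g(z_1')$ by continuity, while the right side tends to $z_1'$ because $g(w)\in U$. Hence $g(z_1')=z_1'$, and symmetrically $f(z_2')=z_2'$; thus $z_1'$ is fixed by $g$ and $z_2'$ by $f$ (one must separately note that $z_i'$ is not a pole of the opposite map, which is automatic when the relevant poles lie outside $\overline{\mathbb{D}}$, as in Remark \ref{le1}).

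Next I would dispose of the interior case. If $z_1'\in\mathbb{D}\subset U$, then since $z_1'$ is a fixed point of $g$ the sequence $g^n(z_1')$ is constant and equal to $z_1'$, while by the extended convergence $g^n(z_1')\to z_2'$; therefore $z_1'=z_2'$, contradicting distinctness. The same argument applies if $z_2'\in\mathbb{D}$. Consequently the only surviving configuration is that both $z_1'$ and $z_2'$ lie on $\partial\mathbb{D}\cap\partial U$ and are parabolic fixed points, exactly the situation produced by Theorem \ref{le2}.

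\textbf{The main obstacle} is this parabolic boundary case, where $U$ is at once a parabolic basin of $f$ attracting to $z_1'$ and of $g$ attracting to $z_2'$. My proposed route is through Fatou coordinates: choose the Fatou coordinate $\Phi$ on $U$ conjugating $f$ to the translation $w\mapsto w+1$, so that $\mathrm{Re}\,\Phi\to+\infty$ corresponds to approach to $z_1'$. Since $g$ commutes with $f$, fixes $z_1'$, and preserves $U$, it descends under $\Phi$ to a holomorphic self-map commuting with $w\mapsto w+1$, which must be a translation $w\mapsto w+\tau$; then $g^n$ corresponds to $w\mapsto w+n\tau$. Convergence of $g^n$ to a single boundary point of $U$ should force $\mathrm{Re}\,\tau>0$ and identify that point with the same end of the petal as for $f$, giving $z_2'=z_1'$. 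Making this rigorous is the delicate part: one must justify that $g$ genuinely descends to a translation on the Écalle--Voronin cylinder, rule out $\mathrm{Re}\,\tau\le 0$ (where $g^n$ would either leave $U$ or fail to converge to a point), and finally check that the classical exceptional commuting pairs---common iterates of power maps, Chebyshev maps, and Latt\`es examples---cannot place $\mathbb{D}$ in a parabolic basin with a boundary Denjoy--Wolff point in the first place. I expect the translation normalisation of $g$ in Fatou coordinates, together with the exclusion of $\mathrm{Re}\,\tau\le 0$, to be the crux of a complete proof.
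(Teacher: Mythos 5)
First, an important point of comparison: the paper does not prove this statement at all --- it is posed explicitly as an open conjecture, supported only by Theorem \ref{theorem1} (countability of $DW(G)$) and by an example showing that the degree-at-least-two hypothesis cannot be dropped. So there is no proof of the authors' to measure yours against; the only question is whether your argument actually settles the conjecture, and it does not --- as you yourself acknowledge. The early steps are sound: granting that commuting rational maps of degree at least two share their Fatou set (which in the finitely generated Abelian setting is available from the fact $J(G)=J(f)$ cited from \cite{hinkkanen1996dynamics}), the component $U\supset\mathbb{D}$ is common to $f$ and $g$ and forward invariant under both, Vitali extends the convergence from $\mathbb{D}$ to $U$, the relation $g\circ f^n=f^n\circ g$ yields $g(z_1')=z_1'$ and $f(z_2')=z_2'$, and the interior case $z_i'\in\mathbb{D}$ is correctly disposed of. This is a reasonable reduction and already more than the paper offers.

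The genuine gap is exactly where you flag it, and it is not a routine verification. In Fatou coordinates a holomorphic map $\sigma$ commuting with $w\mapsto w+1$ satisfies only $\sigma(w+1)=\sigma(w)+1$, i.e.\ $\sigma(w)=w+\pi(w)$ with $\pi$ holomorphic and $1$-periodic; concluding that $\sigma$ is a genuine translation requires proving $\pi$ is constant, which is the (hard) problem of computing the centralizer of a parabolic germ via its horn maps and \'Ecalle--Voronin invariants --- nothing in normality or Vitali hands this to you, and $g$ restricted to $U$ need not even be injective, so the conjugated map $\Phi\circ g\circ\Phi^{-1}$ is only well defined after restricting to a petal on which $\Phi$ is univalent. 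Moreover, your reduction leaves open the configuration in which $z_1'\neq z_2'$ are two \emph{distinct} parabolic fixed points on $\partial U$, each fixed by both maps; your Fatou-coordinate argument is anchored at $z_1'$ only, and if $\pi$ is nonconstant the $g$-orbits in those coordinates could drift toward a different end of $U$, i.e.\ toward $z_2'$. Until the periodic term is shown constant and this two-point configuration is excluded, the conjecture remains open, which is consistent with the authors' own assessment of its status.
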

This conjecture is false for Abelian semigroups of rational maps under composition in general. This can be seen in the following example.
\begin{exmp}
Let $G=<f,g>$ where $f(z)=1$ and $g(z)=z^2$. We see that $f\circ g=g\circ f= 1$. So $G$ is a finitely generated Abelian semigroup of rational maps. Now $f^n(z)\rightarrow 1$ and $g^n(z)\rightarrow 0$ as $n\rightarrow \infty$ for all $z\in\mathbb{D}$. So $DW(G)=\{1,0\}$. But $G$ is not a rational semigroup as degree of $f$ is less than $2.$ 
\end{exmp}

\section{Properties of Denjoy-Wolff like set}\label{s3}
We now discuss some properties of the Denjoy-Wolff like set for various rational semigroups. First, we show that when $DW(G)$ lies inside the unit disk then $DW(G)$ cannot be finite with cardinality $>1$ for Abelian rational semigroup $G$.
\begin{thm}
    Let $G$ be an Abelian rational semigroup. If $DW(G)\subset \mathbb{D}$ and $DW(G)\neq\emptyset$ then $DW(G)$ is either infinite or singleton.
\end{thm}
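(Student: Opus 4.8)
The plan is to establish the dichotomy by ruling out the intermediate possibility, i.e.\ by showing that $DW(G)$ cannot be finite with $card(DW(G))\ge 2$; since a nonempty subset of $\mathbb{D}$ that is neither infinite nor of cardinality between $2$ and $\infty$ must be a singleton, this yields the claim. In fact I would prove the slightly stronger statement that no two distinct points of $DW(G)$ can lie in $\mathbb{D}$ at once. So suppose for contradiction that $z_1,z_2\in DW(G)$ with $z_1\neq z_2$ and $z_1,z_2\in\mathbb{D}$. By the definition of the Denjoy-Wolff like set there are $f,g\in G$ with $f^n(z)\to z_1$ and $g^n(z)\to z_2$ for every $z\in\mathbb{D}$. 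The first step is to note that $z_1$ is a fixed point of $f$: since $f$ is continuous on $\widehat{\mathbb{C}}$, letting $n\to\infty$ in $f^{n+1}(z)=f(f^n(z))$ gives $f(z_1)=z_1$, and likewise $g(z_2)=z_2$.

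The core of the argument exploits the Abelian hypothesis together with the finiteness of the fixed-point set of a rational map of degree at least $2$. Because $z_1\in\mathbb{D}$, the defining property of $g$ gives $g^n(z_1)\to z_2$. Since $G$ is Abelian, $f$ commutes with every power of $g$, so
\[
f\bigl(g^n(z_1)\bigr)=g^n\bigl(f(z_1)\bigr)=g^n(z_1)\qquad\text{for all }n,
\]
which means that the entire forward $g$-orbit of $z_1$ is contained in $\mathrm{Fix}(f)$, the set of fixed points of $f$. As $\deg f\ge 2$, the set $\mathrm{Fix}(f)$ is a finite (hence discrete) subset of $\widehat{\mathbb{C}}$, and a sequence lying in a finite set while converging to $z_2$ must be eventually constant; therefore $z_2\in\mathrm{Fix}(f)$.

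The contradiction is then immediate: $z_2$ is a fixed point of $f$ lying in $\mathbb{D}$, so $f^n(z_2)=z_2$ for all $n$, while $z_2\in\mathbb{D}$ forces $f^n(z_2)\to z_1$; hence $z_1=z_2$, contrary to assumption. Consequently $DW(G)$ contains at most one point of $\mathbb{D}$, and since $DW(G)\subset\mathbb{D}$ is nonempty it is a singleton, which in particular excludes the finite-with-cardinality-$\ge 2$ case and establishes the stated dichotomy.

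The delicate points I would expect to guard are the justification that the interior Denjoy-Wolff limit is a genuine fixed point (handled by continuity of $f$ on the sphere, independently of whether $f$ maps $\mathbb{D}$ into itself) and the step ``convergent-in-a-finite-set $\Rightarrow$ eventually constant,'' which relies on $\mathrm{Fix}(f)$ being finite because $\deg f\ge 2$. The main structural obstacle is really the use of \emph{exact} commutativity of the iterates $f\circ g^n=g^n\circ f$: this is precisely what forces the orbit $\{g^n(z_1)\}$ into $\mathrm{Fix}(f)$, and it is where the Abelian hypothesis is indispensable. Without commutativity this mechanism breaks down, and indeed Example \ref{example4} shows that dropping it allows $DW(G)$ to be infinite.
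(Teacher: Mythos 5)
Your proof is correct, but it is not the route the paper takes. The paper assumes $DW(G)=\{z_1,\dots,z_n\}$ is finite, picks $f_i$ with $f_i^k\to z_i$ on $\mathbb{D}$, forms the composite $h=f_1\circ f_j$, and evaluates $h^k=f_1^k\circ f_j^k=f_j^k\circ f_1^k$ in two ways to force $z_1=z_j$; the analytic crux there is the step $f_1^k\bigl(f_j^k(z)\bigr)\to z_1$, which needs $z_j\in\mathbb{D}$ together with (implicit) locally uniform convergence of $f_1^k$. Your argument instead goes through fixed points: continuity gives $f(z_1)=z_1$, exact commutativity $f\circ g^n=g^n\circ f$ (which does follow from the pairwise permutability of the generators) traps the $g$-orbit of $z_1$ inside the finite set $\mathrm{Fix}(f)$, and finiteness of $\mathrm{Fix}(f)$ for $\deg f\ge 2$ forces $z_2\in\mathrm{Fix}(f)$, hence $z_2=z_1$. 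This is arguably cleaner --- it sidesteps the limit-interchange/normality issue entirely --- and it proves something strictly stronger: an Abelian rational semigroup has at most one Denjoy-Wolff point in $\mathbb{D}$, so under the theorem's hypotheses $DW(G)$ is always a singleton and the ``infinite'' alternative is vacuous. That sharper conclusion is consistent with the paper's infinite example (whose Denjoy-Wolff points all lie on $\partial\mathbb{D}$, for a non-Abelian semigroup) and lends support to the paper's conjecture that $DW(G)$ is empty or a singleton for finitely generated Abelian rational semigroups.
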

\begin{proof}
    If $DW(G)$ is infinite, we have nothing to prove. Let $DW(G)$ be finite. Assume $DW(G)=\{z_1,z_2,....z_n\}$. So there exists $\{f_1,f_2,...f_n\}\subset G$ such that $f_i^n(z)\to z_i$ as $n\to\infty$ for all $z\in\mathbb{D}$ and $i=1,2,...,n$. Let $h=f_1\circ f_j\in G$ for some $j\in\{1,2,...,n\}$. Now $h^k(z)=(f_1\circ f_j)^k(z)=(f_1^k\circ f_j^k)(z)=(f_j^k\circ f_1^k)(z)$ as $G$ is an Abelian semigroup. Now $(f_1^k\circ f_j^k)(z)\to z_1$ and $(f_j^k\circ f_1^k)(z)\to z_j$ as $k\to\infty$ for all $z\in\mathbb{D}$. This implies $h^k(z)\to z_1=z_j$ as $k\to\infty$ for all $z\in\mathbb{D}$. As we have chosen $j$ arbitrarily, $z_j=z_1$ for all $j=2,3,...,n$. So, $DW(G)=\{z_1\}$ and is singleton.
\end{proof}
The next theorem shows that when the open unit disk intersects the Julia set then $DW(G)$ is empty for any finitely generated Abelian rational semigroup.
\begin{thm}
Let $G$ be a finitely generated Abelian rational semigroup. If $\mathbb{D}\cap J(G)\neq \emptyset$ then $DW(G)$ is empty.
\end{thm}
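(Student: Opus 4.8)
The plan is to argue by contradiction, assuming $DW(G)\neq\emptyset$ and producing a point of $\mathbb{D}$ on which the relevant iterates cannot converge to a single value.

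First I would reduce the semigroup-level hypothesis to a statement about a single map. Since $G=\langle f_1,\dots,f_n\rangle$ is Abelian, every element of $G$ is a composition of the generators and therefore commutes with each $f_i$. Invoking the classical permutability theorem for commuting rational maps of degree at least two, which share a common Julia set, I obtain $J(g)=J(f_1)$ for every $g\in G$. Because Julia sets are closed, this gives $J(G)=\overline{\bigcup_{g\in G}J(g)}=J(f_1)$, so the common Julia set of all members of $G$ equals $J(G)$. Consequently the hypothesis $\mathbb{D}\cap J(G)\neq\emptyset$ upgrades to $\mathbb{D}\cap J(f)\neq\emptyset$ for every $f\in G$, and in particular for any map that might realize a Denjoy-Wolff point.

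Next comes the core of the argument. Suppose $z'\in DW(G)$, witnessed by some $f\in G$ with $f^m(z)\to z'$ for all $z\in\mathbb{D}$. The set $\mathbb{D}\cap J(f)$ is nonempty and relatively open in $J(f)$; since $J(f)$ is a perfect set and the repelling periodic points of $f$ are dense in it, $\mathbb{D}\cap J(f)$ contains at least two distinct periodic points $p_1\neq p_2$. For a periodic point $p\in\mathbb{D}$ the orbit $\{f^m(p)\}_m$ is a periodic sequence, so it can converge only if it is constant; hence $f^m(p)\to z'$ forces $z'=p$. Applying this to $p_1$ and $p_2$ yields $z'=p_1$ and $z'=p_2$, contradicting $p_1\neq p_2$. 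Therefore no such $f$ exists and $DW(G)=\emptyset$.

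The main obstacle is the first step: one must know that all elements of an Abelian rational semigroup share a single Julia set, which rests on the permutability theorem for commuting rational maps of degree at least two. Once the identification $J(G)=J(f)$ is in hand, the dynamical contradiction is routine, using only that $J(f)$ is perfect with a dense set of repelling cycles and that a periodic orbit converges precisely when it is constant.
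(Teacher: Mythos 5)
Your proof is correct, and its first half coincides with the paper's: both reduce the semigroup hypothesis to a single-map statement via the fact that all elements of a finitely generated Abelian rational semigroup share one Julia set, so that $\mathbb{D}\cap J(G)\neq\emptyset$ becomes $\mathbb{D}\cap J(f)\neq\emptyset$ for the map $f$ witnessing a putative Denjoy--Wolff point (the paper cites Hinkkanen--Martin for this, you invoke the permutability theorem; it is the same underlying fact, though you should be aware that for general rational maps, as opposed to polynomials, this is not entirely ``classical'' and is exactly what the cited semigroup literature supplies). Where you genuinely diverge is in extracting the contradiction. The paper argues that $f^n(z)\to z'$ for every $z\in\mathbb{D}$ forces $\mathbb{D}\subset F(f)$, which contradicts $\mathbb{D}\cap J(f)\neq\emptyset$; this is a one-line appeal to normality, and as stated it leans on the unproved implication that pointwise convergence of the iterates on $\mathbb{D}$ yields normality there (false for general sequences of holomorphic maps, and requiring an argument for iterates of a rational map). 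You instead avoid normality altogether: since $\mathbb{D}\cap J(f)$ is a nonempty relatively open subset of the perfect set $J(f)$, in which repelling periodic points are dense, it contains two distinct periodic points, and a periodic orbit converges only if it is constant, forcing $z'$ to equal both of them. This is more elementary and, in effect, supplies the justification the paper's normality step glosses over; the paper's route is shorter but buys that brevity by asserting $\mathbb{D}\subset F(f)$ without proof. Both arguments establish the theorem.
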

\begin{proof}
As $G$ is a finitely generated Abelian rational semigroup so for any $f\in G$, $J(G)=J(f)$ \cite{hinkkanen1996dynamics}. Now let $z_0\in DW(G)$. This implies that there is a $g\in G$ such that $g^n(z)\rightarrow z_0$ for all $z\in\mathbb{D}$ as $n\rightarrow \infty$. Then $\mathbb{D}\subset F(g)$ for some $g\in G$. This is a contradiction since $\mathbb{D}\cap J(G)=\mathbb{D}\cap J(g)\neq\emptyset$. So $DW(G)$ must be empty.
\end{proof}
We can remove the restriction of $G$ being finitely generated Abelian by introducing a new condition.
\begin{thm}
    Let $G$ be a rational semigroup such that $\mathbb{D}\subset J(g)$ for all $g\in G$. Then $DW(G)$ is empty.
\end{thm}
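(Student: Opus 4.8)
The plan is to argue by contradiction, following the same strategy as the preceding theorem but now exploiting the stronger per-element hypothesis $\mathbb{D}\subset J(g)$ for every $g\in G$, which lets us drop the finitely generated Abelian assumption entirely. Suppose, contrary to the claim, that $DW(G)\neq\emptyset$, and pick $z_0\in DW(G)$. By the definition of the Denjoy-Wolff like set there exists an $f\in G$ with $f^n(z)\to z_0$ as $n\to\infty$ for all $z\in\mathbb{D}$.

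The central step is to convert this convergence into the assertion $\mathbb{D}\subset F(f)$. Since on all of $\mathbb{D}$ the sequence $\{f^n\}$ converges to the single value $z_0$, the limit is the constant (hence holomorphic) map $z\mapsto z_0$, so $\{f^n\}$ is normal on $\mathbb{D}$ and therefore every point of $\mathbb{D}$ lies in the Fatou set of $f$; that is, $\mathbb{D}\subset F(f)$. This is precisely the implication already invoked in the proof of the previous theorem, so I would simply cite it there. I would also note that the location of $z_0$ causes no trouble: whether $z_0\in\mathbb{D}$ (an attracting fixed point) or $z_0\in\partial\mathbb{D}$ (the parabolic situation occurring in Theorem \ref{le2}, where $z_0$ itself may lie in $J(f)$), the open disk $\mathbb{D}$ still lies entirely in $F(f)$ since it is contained in the corresponding basin.

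To finish, I would specialize the hypothesis to $g=f$, obtaining $\mathbb{D}\subset J(f)$. Because $F(f)$ and $J(f)$ are disjoint and $\mathbb{D}$ is non-empty, the inclusions $\mathbb{D}\subset F(f)$ and $\mathbb{D}\subset J(f)$ together give $\mathbb{D}\subset F(f)\cap J(f)=\emptyset$, which is impossible. This contradiction forces $DW(G)=\emptyset$.

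I expect the only genuinely delicate point to be the passage from the convergence $f^n(z)\to z_0$ on $\mathbb{D}$ to normality of $\{f^n\}$ on $\mathbb{D}$ (and hence $\mathbb{D}\subset F(f)$): one must observe that convergence to a single constant leaves no subsequence that can fail to be normal, exactly the fact used in the preceding theorem. Once that implication is granted, the disjointness of the Fatou and Julia sets closes the argument immediately, and no further structural assumptions on $G$ are needed.
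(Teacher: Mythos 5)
Your proof is correct and in substance identical to the paper's: both arguments come down to the observation that pointwise convergence of $\{f^n\}$ to a constant on $\mathbb{D}$ forces $\mathbb{D}\subset F(f)$, which is incompatible with the hypothesis $\mathbb{D}\subset J(f)$ (the paper phrases this contrapositively, deducing from non-normality that $\{g^n(z)\}$ fails to converge at some point of every neighbourhood in $\mathbb{D}$). The delicate step you flag---upgrading pointwise convergence to normality of $\{f^n\}$ on $\mathbb{D}$---is invoked at exactly the same level of detail in the paper's own proof and in the theorem preceding it, so your proposal neither adds nor loses rigour relative to the original.
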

\begin{proof}
    Let $z_g\in\mathbb{D}$. As $\mathbb{D}\subset J(g)$, the family $\{g^n\}$ is not normal at any neighbourhood of $z_g$. So, the sequence $\{g^n(z)\}$ can not converge for a point $z$ in any neighbourhood $U$ of $z_g$. Let us choose a neighbourhood $U_g$ of $z_g$ such that $U_g\subset \mathbb{D}$. So $\{g^n(z)\}$ does not converge for at least one point $z$ in $U_g$. So for any $f\in G$, there exists at least one point $z$ in $\mathbb{D}$ for which $\{f^n(z)\}$ does not converge. So $DW(G)$ must be empty.
\end{proof}
The next result concerns the Denjoy-Wolff like set for certain polynomial semigroups. Class $\mathcal {G}$ is the class of all polynomial semigroups which are postcritically bounded and all elements of the semigroup are of degree greater than 1 \cite{sumi2007dynamics}.
\begin{thm}
    Let $G\in\mathcal{G}$ be a polynomial semigroup. Suppose $z_f\in\mathbb{D}$ be the Denjoy-Wolff point of some $f\in G$. If $f^{\prime}(z_f)=0$ then either $z_f\in F(g)$ for all $g\in G$ or $DW(G)\cap J(G)^\circ\neq\emptyset$.
\end{thm}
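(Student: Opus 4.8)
The plan is to first record the trivial observation that $z_f\in DW(G)$: indeed $f^n(z)\to z_f$ for all $z\in\mathbb{D}$ by hypothesis, so $z_f$ is a Denjoy--Wolff point of the element $f$ and hence lies in $DW(G)$. Consequently, to prove the stated dichotomy it suffices to show that if the first alternative fails then $z_f\in J(G)^\circ$, which immediately yields $z_f\in DW(G)\cap J(G)^\circ\neq\emptyset$. So I assume there is some $g_0\in G$ with $z_f\notin F(g_0)$, i.e. $z_f\in J(g_0)\subset J(G)$. The whole content of the theorem is then the claim that \emph{a superattracting fixed point of an element of $G\in\mathcal{G}$ that lies in $J(G)$ is in fact an interior point of $J(G)$}.

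Since $f(z_f)=z_f$ and $f'(z_f)=0$, the point $z_f$ is a superattracting fixed point of $f$ of some local degree $d\geq 2$. I would fix a B\"ottcher coordinate $\phi$ on a forward-invariant neighbourhood $V$ of $z_f$ conjugating $f$ to $\zeta\mapsto\zeta^{d}$ on a disk $\{|\zeta|<r\}$, so that $f^n\to z_f$ uniformly on $V$ and $f^n(V)\subset V$. Arguing by contradiction, suppose $z_f\notin J(G)^\circ$. Then $F(G)$ meets every neighbourhood of $z_f$, so I can pick $w\in F(G)\cap V$ with $w\neq z_f$ and a round disk $N\subset (F(G)\cap V)\setminus\{z_f\}$ about $w$. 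Because $F(G)$ is forward invariant, $f^n(N)\subset F(G)$ for every $n$.

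The key geometric step is to understand the shape of $f^n(N)$. In the B\"ottcher coordinate $N$ corresponds to an open set avoiding the origin, so it contains a product region $\{\rho e^{i\theta}:\rho_1<\rho<\rho_2,\ \theta_1<\theta<\theta_2\}$; applying $\zeta\mapsto\zeta^{d^n}$ multiplies the angular width by $d^n$, which exceeds $2\pi$ once $n$ is large. Hence for large $n$ the image $f^n(N)$ contains a topological annulus $A_n$ encircling $z_f$, with $z_f$ lying in the bounded complementary component of $A_n$ (note $z_f\notin f^n(N)$, since the only preimage of $z_f$ inside $V$ is $z_f$ itself). As $A_n\subset f^n(N)\subset F(G)\subset F(g_0)$, the annulus $A_n$ is disjoint from $J(g_0)$.

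Finally I would invoke the defining property of the class $\mathcal{G}$. Because $G\in\mathcal{G}$ is postcritically bounded, every element of $G$ is a polynomial with connected Julia set, so $J(g_0)$ is a connected compact set of positive diameter containing $z_f$. For $n$ large the hole of $A_n$ shrinks arbitrarily close to $z_f$, so $J(g_0)$ has points both inside this hole (namely $z_f$) and outside $A_n$; connectedness then forces $J(g_0)$ to meet $A_n$, contradicting $A_n\cap J(g_0)=\emptyset$. This contradiction shows $z_f\in J(G)^\circ$, completing the proof. I expect the main obstacle to be making the wrapping step precise, namely showing that the forward images of a Fatou disk genuinely surround $z_f$ — this is exactly where the hypothesis $f'(z_f)=0$ (local degree $\geq 2$) is indispensable — together with the appeal to connectedness of $J(g_0)$, which is where membership in $\mathcal{G}$ enters.
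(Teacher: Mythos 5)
Your proposal is correct, but it takes a different route at the crucial step. The paper's proof has the same skeleton — note $z_f$ is a superattracting fixed point, split into the two cases, and in the second case observe $z_f\in J(g_0)\subset J(G)$ — but then it simply invokes Theorem 2.21 of \cite{sumi2007dynamics}, which states that for $G\in\mathcal{G}$ a superattracting fixed point of an element of $G$ lying in $J(G)$ must lie in $\mathrm{int}(J(G))$. You instead reprove that cited fact from scratch: B\"ottcher coordinates at $z_f$, the wrapping of a Fatou disk $N$ into round annuli $A_n$ encircling $z_f$ with holes shrinking to $z_f$, forward invariance of $F(G)$, and the connectedness of $J(g_0)$ (which follows from postcritical boundedness, since every finite critical point of $g_0$ then has bounded orbit) to force $J(g_0)\cap A_n\neq\emptyset$, a contradiction. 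Your argument is sound; the only points worth tightening are routine: choose the B\"ottcher domain $V=\phi^{-1}(\{|\zeta|<r\})$ with $r<1$ so that it is forward invariant and the conjugacy iterates, take the sector-annulus with outer radius $\rho_2<1$ so that the disks $\phi^{-1}(\{|\zeta|\le\rho_2^{d^n}\})$ genuinely shrink to $z_f$, and note that $J(g_0)$ is a perfect set, hence of positive diameter, so it cannot be swallowed by those shrinking disks. What the paper's citation buys is brevity; what your version buys is a self-contained argument that makes visible exactly where the hypotheses enter — $f'(z_f)=0$ gives local degree $\ge 2$ and hence the angular wrapping, and membership in $\mathcal{G}$ gives connectedness of $J(g_0)$.
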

\begin{proof}
    Since $f^n(z)\to z_f\in\mathbb{D}$ for all $z\in\mathbb{D}$ as $n\to\infty$, $f$ has a fixed point at $z_f$. Also as $f^{\prime}(z_f)=0$, $z_f$ is super-attracting fixed point of $f$. Now either $z_f\in F(g)$ for all $g\in G$ or $z_f\in J(g)$ for at least one $g\in G$. In the first case we have nothing to prove. Let $z_f\in J(g)$ for some $g\in G$. As $J(g)\subset J(G)$, $z_f\in J(G)$. So by [\cite{sumi2007dynamics},Theorem 2.21], we can say that $z_f\in J(G)^\circ$. Since by definition $z_f\in DW(G)$, we have $z_f\in DW(G)\cap J(G)^\circ$.
\end{proof}

Now we discuss the cardinality of $DW(G)$ for a type of semigroup $G$ containing functions of a certain category. By $\mathbb{D}(0,r)$ we mean the open disk of radius $r$ with center at the origin. Let $F_r=\{f:\mathbb{C}\rightarrow\mathbb{C}: f:\mathbb{D}(0,r)\rightarrow \mathbb{D}(0,r)$ be analytic and $f(0)=0,\text{ }0<r<1\}$.
\begin{thm}\label{th34}
Let $G=<f_1,f_2,\dots>$ be a rational semigroup under composition such that $f_i\in F_r$ for all $i=1,2,\dots$ and $r$ is fixed real number such that $0<r<1.$ Let $B$ be a set with non-empty interior which is a union of circles centered at the origin and which contains a sequence of circles tending to the origin. If $B$ is forward invariant in $G$, then $DW(G)$ is either empty or $\{0\}$.
\end{thm}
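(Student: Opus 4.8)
The plan is to exploit the two defining features of $F_r$: every generator fixes the origin and maps the disk $\mathbb{D}(0,r)$ into itself. Both properties are inherited by every element of the semigroup, and together with the Schwarz lemma they force the origin to be the only possible value of a Denjoy-Wolff point. So the target is the inclusion $DW(G)\subseteq\{0\}$, which is exactly the assertion ``empty or $\{0\}$''.

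First I would check, by a short induction on word length, that every $f\in G$ satisfies $f(0)=0$ and $f(\mathbb{D}(0,r))\subseteq\mathbb{D}(0,r)$: this is immediate for the generators, and it is preserved under composition because $f_i(0)=0$ and $f_i(\mathbb{D}(0,r))\subseteq\mathbb{D}(0,r)$ for each $i$. Consequently each iterate $f^n$ is a well-defined analytic self-map of $\mathbb{D}(0,r)$ fixing $0$, so after the rescaling $w\mapsto \tfrac1r f(rw)$ the Schwarz lemma applies on $\mathbb{D}$. Since $f$ is a rational map of degree at least $2$, it cannot agree with a rotation on $\mathbb{D}(0,r)$ (otherwise the identity theorem would make $f$ a global rotation, which has degree $1$), so the equality case of Schwarz is ruled out; one obtains $|f'(0)|<1$ and $f^n(z)\to 0$ uniformly on compact subsets of $\mathbb{D}(0,r)$. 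In particular $f^n(0)=0$ for every $n$. Now if $z'\in DW(G)$ is witnessed by some $f\in G$, then $f^n(z)\to z'$ for all $z\in\mathbb{D}$; restricting to $z=0\in\mathbb{D}(0,r)\subset\mathbb{D}$ (or to the whole subdisk $\mathbb{D}(0,r)$) and using $f^n(0)=0$ forces $z'=0$. Hence $DW(G)\subseteq\{0\}$.

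The forward invariance of $B$ is not what selects the point $0$ — that is already forced by $f(0)=0$ — but it governs the behaviour on the outer annulus $\{r\le|z|<1\}$, where a rational $f$ may have poles and need not contract. Because $B$ is forward invariant, consists of circles centred at the origin, and contains circles of arbitrarily small radius accumulating at $0$, any orbit meeting $B$ stays trapped in $B$ and cannot escape toward $\partial\mathbb{D}$; this is the ingredient that keeps the convergence ``$f^n(z)\to z'$ for all $z\in\mathbb{D}$'' demanded by the definition of $DW(G)$ consistent only with the common limit $z'=0$ coming from $\mathbb{D}(0,r)$. I expect the genuine technical point to be the Schwarz-lemma step on $\mathbb{D}(0,r)$, and in particular excluding the rotation case: one must invoke that $f$ is rational of degree at least $2$ together with the identity theorem, rather than the Denjoy-Wolff theorem itself, since the latter presupposes a holomorphic self-map of the \emph{full} unit disk, which the generators here need not provide.
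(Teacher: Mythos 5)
Your proof is correct, but it takes a genuinely different route from the paper's. The paper invokes Lemma 6 of Stankewitz (quoted just before the proof) to conclude from the forward invariance of $B$ that every $g\in G$ has the explicit form $g(z)=az^{j}$, computes $g^{n}$, and then splits into cases according to whether some $g$ also contracts $\mathbb{D}\setminus\mathbb{D}(0,r)$ to the origin. You bypass that lemma entirely: since each generator lies in $F_r$, every $f\in G$ fixes $0$, so $f^{n}(0)=0$ for all $n$, and any $z'\in DW(G)$ witnessed by $f$ must satisfy $z'=\lim_n f^{n}(0)=0$; hence $DW(G)\subseteq\{0\}$, which is exactly ``empty or $\{0\}$''. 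This one-line observation at $z=0$ already closes the proof, so your Schwarz-lemma discussion (correct, and correctly excluding the rotation case via degree $\geq 2$ and the identity theorem) is a bonus rather than a necessity, and your closing paragraph about $B$ trapping orbits away from $\partial\mathbb{D}$ is speculative and not load-bearing --- which is fine, because your argument shows the hypothesis on $B$ is in fact superfluous for the stated conclusion. What the paper's heavier route buys is the structural classification $g(z)=az^{j}$ of all elements of $G$, which is strictly more information than the theorem asserts; what your route buys is economy and the observation that the theorem holds for \emph{any} semigroup generated by maps in $F_r$, with or without an invariant $B$. Incidentally, your approach also sidesteps a discrepancy in the paper: the theorem hypothesizes that $B$ has \emph{non-empty} interior, while the quoted lemma (and the paper's own worked example) require \emph{empty} interior; a proof relying on that lemma must resolve this, yours need not.
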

Before we discuss the proof of the theorem, we first show that a semigroup $G$ and a set $B$ exist which satisfies the above conditions. Let $f,\text{ }g:\mathbb{C}\rightarrow\mathbb{C}$ be two polynomial functions defined as $f(z)=\frac{z^2}{r}$ and $g(z)=\frac{z^3}{r^2}$ where $0<r<1$. Now when $z\in\mathbb{D}(0,r)$ then we see that $\lvert f(z)\rvert=\frac{\lvert z\rvert^2}{r}<r$ and $\lvert g(z)\rvert=\frac{\lvert z\rvert^3}{r^2}<r$. So $f(\mathbb{D}(0,r))\subset \mathbb{D}(0,r)$ and $g(\mathbb{D}(0,r))\subset \mathbb{D}(0,r)$. Also $f(0)=g(0)=0$ and both functions are analytic in $\mathbb{D}(0,r)$. So, $f,\text{ }g\in F_r$. Now let $G=<f,g>$. Then we see that $(f\circ g)(z)=f(g(z))=f\Bigl(\frac{z^3}{r^2}\Bigl)=\frac{z^6}{r^5}$. Also, $(g\circ f)(z)=g(f(z))=g\Bigl(\frac{z^2}{r}\Bigl)=\frac{z^6}{r^5}$. So $G$ is an Abelian semigroup, and we can write any element of $G$ as $f^k\circ g^s$ for some non-negative integer $k,\text{ }s$. Now $(f^k\circ g^s)(z)=f^k\biggl(\frac{z^{3^s}}{r^{2\frac{3^s-1}{3-1}}}\biggl)=\frac{\Bigl(\frac{z^{3^s}}{r^{(3^s-1)}}\Bigl)^{2^k}}{r^{2^k-1}}=\frac{z^{3^s.2^k}}{r^{(3^s.2^k-1)}}$. Let $M=\{n\in\mathbb{N}:$ either $3\lvert n$ or $2\lvert n$\}. So, every $h\in G$ is of the form $h(z)=\frac{z^l}{r^{l-1}}$ for some $l\in M$. Now we choose the set $B$. Let $B=\cup_{j=1}^{\infty}\{z:\lvert z\rvert=r^j$ where $j\in\mathbb{N}\}$. Clearly $B$ has empty interior and $B$ is a union of circles centered at origin. Also $C_j=\{z:\lvert z\rvert=r^j\}$ for $j=1,2,\dots$ is a sequence of circles lying in $B$ which tends to $0$ as $j\rightarrow\infty$ as $0<r<1$. Now we show that $B$ is forward invariant for any $h\in G$. Let $z\in B$. So there exists a natural number $k$ such that $\lvert z\rvert=r^k$. Let $h\in G$. We have already shown that $h(z)=\frac{z^l}{r^{l-1}}$ for some $l\in M$. So $\lvert h(z)\rvert =\Bigl\lvert \frac{z^l}{r^{l-1}}\Bigl\rvert=\frac{r^{kl}}{r^{l-1}}=r^{kl-l+1}$. So, $h(z)\in C_{kl-l+1}\subset B$ as $kl-l+1\in\mathbb{N}$. As $z$ is chosen arbitrarily so $h(B)\subset B$. So, $B$ is forward invariant under $h$. Now as $h$ is arbitrary, so $B$ is forward invariant in $G$. \\
Now we prove Theorem \ref{th34}. To prove it we require the following Lemma 6 of \cite{stankewitz1999completely}.
\begin{lemm}\label{lem32}
Let $L:\mathbb{D}(0,r)\rightarrow \mathbb{D}(0,r)$ where $0<r<1$, be an analytic function such that $L(0)=0$. Let $B$ be a set with empty interior which is a union of circles centered at origin and which contains a sequence of circles tending to $0$. If $B$ is forward invariant under the map $L$, then $L$ is of the form $L(z)=az^j$ for some $a\neq 0$ and non-negative integer $j$.
\end{lemm}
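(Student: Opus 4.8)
The plan is to exploit the rigid geometry of $B$---a union of circles centered at the origin with empty interior---together with forward invariance to force $|L|$ to be radially constant, and then to upgrade this to $L$ being a monomial via the local theory of nonvanishing analytic functions. The argument has two distinct phases: a topological phase that pins down the connected components of $B$, and an analytic phase anchored at the origin.

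First I would analyze the connected components of $B$. Let $R=\{\rho:\{|z|=\rho\}\subset B\}$ be the set of radii. Since $B$ has empty interior, $R$ cannot contain any nondegenerate interval, for otherwise $B$ would contain an annulus. Applying the continuous map $z\mapsto|z|$ to any connected subset $C\subset B$ yields a connected subset of $\mathbb{R}$ contained in $R$; by the intermediate value theorem this image must be a single point, so $C$ has constant modulus. Hence the connected components of $B$ are precisely the individual circles $\{|z|=\rho\}$ with $\rho\in R$. Because $L$ is continuous and $B$ is forward invariant, for each $\rho\in R$ the image $L(\{|z|=\rho\})$ is connected and contained in $B$, so it lies in a single circle; that is, $|L|$ is constant on every circle contained in $B$.

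Next I would localize near the origin. Assuming $L\not\equiv 0$, write $L(z)=z^{j}h(z)$ with $j\geq 1$ the order of the zero of $L$ at $0$ and $h$ analytic with $h(0)\neq 0$. Let $\{\rho_n\}$ with $\rho_n\to 0$ be the sequence of radii guaranteed to lie in $R$. The previous step shows $|h(z)|=|L(z)|/|z|^{j}$ is constant on each $\{|z|=\rho_n\}$. Since $h(0)\neq 0$, $h$ is nonvanishing on a neighborhood of $0$, so $\log|h|$ is harmonic there, and the mean value property forces the constant value of $|h|$ on each small circle $\{|z|=\rho_n\}$ to equal $|h(0)|$. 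Fixing one such $\rho_n$ so small that $h$ is nonvanishing on $\overline{\mathbb{D}(0,\rho_n)}$, the maximum modulus principle gives $|h|\leq|h(0)|$ and the minimum modulus principle (valid since $h$ is nonvanishing) gives $|h|\geq|h(0)|$ throughout that disk, whence $|h|\equiv|h(0)|$ there. An analytic function of constant modulus is constant, so $h\equiv h(0)=:a\neq 0$ near $0$, and by the identity theorem $h\equiv a$ on all of $\mathbb{D}(0,r)$. Therefore $L(z)=az^{j}$, as claimed.

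The main obstacle, and the step demanding the most care, is the passage from ``$|L|$ constant on each circle of $B$'' to ``$L$ is a monomial'': a priori, radial constancy of $|L|$ on a sparse family of circles carries no global information, so the argument must be anchored at the origin, where the accumulation $\rho_n\to 0$ and the regularity of $h$ allow the mean value property and the extremal principles to take hold. It is worth emphasizing that both hypotheses are used in an essential way: the empty-interior condition is exactly what separates $B$ into circular components in the topological phase, while the accumulation of circles at $0$ is exactly what powers the local modulus analysis in the analytic phase.
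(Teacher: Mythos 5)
Your proof is correct. Note first that the paper does not actually prove this statement: it is quoted verbatim as Lemma~6 of the cited work of Stankewitz (\emph{Completely invariant Julia sets of polynomial semigroups}), so there is no in-paper argument to compare against; what you have supplied is a valid self-contained proof of an imported result. Both phases of your argument check out. In the topological phase, the observation that $B$ being a union of full circles forces $|z|\in R$ for every $z\in B$, combined with the intermediate value theorem and the empty-interior hypothesis, correctly shows that every connected subset of $B$ has constant modulus, and forward invariance ($L(B)\subset B$) then makes $|L|$ constant on each circle of $B$. In the analytic phase, factoring $L(z)=z^{j}h(z)$ with $h(0)\neq 0$ and applying the maximum and minimum modulus principles on a small closed disk whose boundary circle lies in $B$ is exactly the right mechanism; in fact the mean value property step is redundant, since constancy of $|h|$ on the boundary circle already forces $|h|\equiv c_n$ on the closed disk by the two extremal principles, and then $h$ is constant. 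Two small loose ends you should tie up: (i) you never return to the case $L\equiv 0$, which is excluded because the image of any positive-radius circle of $B$ would then be $\{0\}$, not contained in any circle of positive radius, contradicting forward invariance (and the conclusion requires $a\neq 0$); (ii) since $L(0)=0$ and $a\neq 0$, your $j$ is necessarily at least $1$, which is consistent with (indeed sharper than) the ``non-negative integer'' in the statement.
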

\begin{proof}[Proof of Theorem \ref{th34}]
We have that $G=<f_1,f_2,\dots>$ with $f_i\in F_r$ for $i=1,2,\dots$ and fixed real number $r$ with $0<r<1$. Now if $B$ is forward invariant in $G$ then $B$ is forward invariant for every $g\in G$. So, by previous lemma, $g(z)=az^j$ for some $a\neq 0$ and non-negative integer $j$. Now $g^n(z)=a^{\frac{j^n-1}{j-1}}z^{j^n}$. So when $z\in\mathbb{D}(0,r)$ we have $g^n(z)\rightarrow 0$. Now we have the following cases.\\ \\
\textbf{Case 1:} $g^n(z)\nrightarrow 0$ for all $z\in \mathbb{D}\setminus \mathbb{D}(0,r)$, for any $g\in G$.  So, in this case for any $g\in G$, $g^n(z)$ does not have a unique limit for all $z\in\mathbb{D}$. So $DW(G)$ is empty.\\ \\
\textbf{Case 2:} $g^n(z)\rightarrow 0$ for all $z\in \mathbb{D}\setminus \mathbb{D}(0,r)$ for some $g\in G$. So there exists a $g\in G$ such that $g^n(z)\rightarrow 0$ for all $z\in\mathbb{D}$ as $n\rightarrow\infty$. So $DW(G)=\{0\}$.
\end{proof}

\section{The class $\Psi$}\label{section4}
We now introduce a special type of rational semigroups. Let $G$ be a rational semigroup such that for any $f\in G$, $f(\mathbb{D})\subset \mathbb{D}$ and $f^n(z)\to z_f$ for some $z_f\in \overline{\mathbb{D}}$ for all $z\in\mathbb{D}$ as $n\to\infty$. Let the class of all rational semigroups which satisfy the above condition be denoted by $\Psi$. The class $\Psi$ is not empty as Example \ref{example1} belongs to this class. Also, it is clear by definition that for any rational semigroup $G\in\Psi$, $DW(G)\neq\emptyset$. In the next result, we give a theorem to partition any rational semigroup $G\in\Psi$.
\begin{thm}\label{theorem35}
    Let $G\in\Psi$. Then $G$ can be partitioned into $\lvert DW(G)\rvert$ partitions where $\lvert A\rvert$ is the cardinality of $A$.
\end{thm}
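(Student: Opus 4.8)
The plan is to realise the partition explicitly through an equivalence relation determined by Denjoy-Wolff points, and then to match the resulting classes bijectively with the set $DW(G)$. The definition of the class $\Psi$ guarantees that every $f\in G$ possesses a Denjoy-Wolff point, namely a point $z_f\in\overline{\mathbb{D}}$ with $f^n(z)\to z_f$ for all $z\in\mathbb{D}$ as $n\to\infty$. This point is \emph{unique}, since $\overline{\mathbb{D}}$ is Hausdorff and a convergent sequence there has at most one limit; hence the assignment $f\mapsto z_f$ is a well-defined function $G\to\overline{\mathbb{D}}$ whose image is, by the definition of the Denjoy-Wolff like set, exactly $DW(G)$.

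Next I would define a relation $\sim$ on $G$ by declaring $f\sim g$ precisely when $z_f=z_g$. Being the pullback of equality along the map $f\mapsto z_f$, this relation is automatically reflexive, symmetric and transitive, so $\sim$ is an equivalence relation and its equivalence classes $\{[f]:f\in G\}$ form a partition of $G$. For each $z'\in DW(G)$ the collection of all $f\in G$ with $z_f=z'$ is nonempty, directly from the definition of $DW(G)$, so each point of $DW(G)$ labels a nonempty class and distinct points label distinct classes.

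Finally I would check that the map $\Phi$ sending each class $[f]$ to its common value $z_f$ is a bijection onto $DW(G)$: it is well-defined because members of one class share the same Denjoy-Wolff point, injective because two classes with the same image consist of functions having equal Denjoy-Wolff points and hence coincide, and surjective because every $z'\in DW(G)$ occurs as $z_f$ for some $f\in G$. Consequently the number of partition classes equals $\lvert DW(G)\rvert$, which is exactly the claim. I do not expect a genuine obstacle here; the only subtle point is the uniqueness of the Denjoy-Wolff point, which is what upgrades $f\mapsto z_f$ from a mere relation to an honest function, and which rests on the Hausdorff property of $\overline{\mathbb{D}}$ together with the defining hypotheses of $\Psi$.
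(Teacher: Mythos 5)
Your proposal is correct and follows essentially the same route as the paper: the same equivalence relation $f\sim g\iff z_f=z_g$ and the same bijection from the set of classes onto $DW(G)$. Your explicit remark on the uniqueness of each $z_f$ (so that $f\mapsto z_f$ is a genuine function) is a small point the paper leaves implicit, but the argument is otherwise identical.
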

\begin{proof}
    Since $G\in\Psi$, for any $f\in G$ there exists a $z_f\in\overline{\mathbb{D}}$ such that $f^n(z)\to z_f$ as $n\to\infty$ for all $z\in\mathbb{D}$. Then $DW(G)=\{z_f\in\overline{\mathbb{D}}: f\in G\}$. Now define a relation $\sim$ on $G$ as $g\sim f$ if and only if $z_f=z_g$. As $z_f=z_f$, $f\sim f$ and $\sim$ is reflexive. As $z_f=z_g\implies z_g=z_f$, so $g\sim f\implies f\sim g$ and $\sim$ is symmetric. Also $f\sim g$ and $g\sim h$ implies $z_f=z_g=z_h$ which imply $\sim$ is transitive. So, $\sim$ is an equivalence relation on $G$. Thus it partitions $G$ into some equivalence classes. Let $[f]\subset G$ be the equivalence class $\{g\in G:g\sim f\}$. Let $G/\sim$ = $S=\{[f]:f\in G\}$ which is the collection of all the equivalence classes of $G$. Define $r:S\to DW(G)$ as $r([f])=z_f$. Now let $[f]=[g]$ which implies $g\sim f$ and so $z_f=z_g$ and $r([f])=r([g])$. So, $r$ is well-defined. Now let $r([f])=r([g])$ which implies $z_f=z_g$ for $f,g\in G$. Then $g\sim f$ and $[g]=[f]$. So, $r$ is one-one. Also, by definition $r$ is surjective and so is bijective. Thus $\rvert DW(G)\lvert=\rvert S\lvert$. So, $G$ can be partitioned into $\lvert DW(G)\rvert$ partitions.
\end{proof}
\begin{cor}
Let $G\in \Psi$ be finitely generated Abelian. Then if \\
a) $DW(G)$ is finite then there exists at least one $[f]\in G/\sim$ such that $[f]$ is countably infinite.\\
b) every $[f]\in G/\sim$ is finite then $DW(G)$ is infinite.
\end{cor}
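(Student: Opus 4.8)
The plan is to establish everything from two ingredients already available: the fact that a finitely generated Abelian rational semigroup is countable (Theorem \ref{theorem1}), and the partition of $G$ into $\lvert DW(G)\rvert$ equivalence classes furnished by Theorem \ref{theorem35}. The only genuinely new observation I would need is that $G$ is not merely countable but countably \emph{infinite}; once that is in hand, both parts reduce to the pigeonhole principle.

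First I would record that $G$ is countably infinite. By Theorem \ref{theorem1}, $G$ is countable, so it suffices to rule out finiteness. Since $G$ is a rational semigroup, every generator $f_i$ has degree at least $2$, and the iterates $f_i, f_i^2, f_i^3, \dots$ have degrees $\deg(f_i), \deg(f_i)^2, \deg(f_i)^3, \dots$, a strictly increasing sequence. Hence these iterates are pairwise distinct elements of $G$, so $G$ is infinite; together with countability, $G$ is countably infinite. This is exactly the step where the degree-$\geq 2$ hypothesis built into the definition of a rational semigroup does real work, and I expect it to be the only point requiring care.

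For part (a), I would assume $DW(G)$ is finite. By Theorem \ref{theorem35} the relation $\sim$ (where $f\sim g$ iff $z_f=z_g$) partitions $G$ into exactly $\lvert DW(G)\rvert$ classes, a finite number. Were every class finite, then $G$, being a finite union of finite sets, would be finite, contradicting the previous paragraph. Hence at least one class $[f]$ is infinite; and since $[f]\subset G$ with $G$ countable, $[f]$ is countable, so $[f]$ is countably infinite, as required.

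Part (b) is the contrapositive packaging of the same counting argument. Assuming every $[f]\in G/\sim$ is finite, I would note that $DW(G)$ is countable by Theorem \ref{theorem1}, hence either finite or countably infinite. If $DW(G)$ were finite, then $G=\bigsqcup_{[f]\in G/\sim}[f]$ would be a finite union of finite sets and therefore finite, again contradicting that $G$ is countably infinite. Therefore $DW(G)$ must be infinite. No substantial obstacle arises beyond the infinitude of $G$ noted above; the remainder is elementary cardinality bookkeeping on the partition of Theorem \ref{theorem35}.
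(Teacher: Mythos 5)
Your proof is correct and follows essentially the same route as the paper: both rest on $G$ being countably infinite together with the pigeonhole count over the $\lvert DW(G)\rvert$ classes of Theorem \ref{theorem35}. The only (minor, and arguably more self-contained) difference is that you justify the infinitude of $G$ via the strictly increasing degrees of the iterates of a generator, whereas the paper simply cites the bijection with $\{\mathbb{N}\cup\{0\}\}^n$ from the proof of Theorem \ref{theorem1}.
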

\begin{proof}
    Since $G$ is finitely generated Abelian so from the proof of Theorem \ref{theorem1}, $G$ is countably infinite. Also as $G\in \Psi$, from Theorem \ref{theorem35} we say that $\lvert G\rvert =\Sigma \lvert [f]\rvert$ where summation is taken over the $\lvert DW(G)\rvert$ partitions of $G$. Now the proof follows since finite sum of finite numbers is also finite, and $G$ is countably infinite. 
\end{proof}
Since when $G\in\Psi$ then $DW(G)$ is always non-empty, we  use $DW(G)$ to classify the class $\Psi$. When $G\in \Psi$ then 3 cases can occur. We call $G$,
\begin{itemize}
    \item Absorbing if $DW(G)\subset \mathbb{D}$.
    \item Dispersing if $DW(G)\subset\partial\mathbb{D}$.
    \item Hybrid if $DW(G)\cap \mathbb{D}\neq\emptyset$ and $DW(G)\cap\partial \mathbb{D}\neq\emptyset$.
\end{itemize}
\begin{exmp}
    Let $G=<f,g>$ where $f(z)=z^2$ and $g(z)=z^3$. Clearly $G\in\Psi$. Then $DW(G)=\{0\}$. So $G$ is absorbing.
\end{exmp}
\begin{thm}\label{th42}
    Let $G\in\Psi$ and let $G_\phi=\{\phi\circ g\circ \phi^{-1}:g\in G\}$ where $\phi$ is a continuous function from $\widehat{\mathbb{C}}\to\widehat{\mathbb{C}}$ and $\phi\lvert_{\mathbb{D}}$ is bijection from $\mathbb{D}$ to $\mathbb{D}$. Then $G$ is absorbing if and only if $G_\phi$ is absorbing.
\end{thm}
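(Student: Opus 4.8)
The plan is to reduce the whole statement to a single identity, $DW(G_\phi)=\phi(DW(G))$, together with the fact that $\phi$ respects the splitting $\overline{\mathbb{D}}=\mathbb{D}\cup\partial\mathbb{D}$. First I would set up the conjugacy of the dynamics. Fix $h=\phi\circ g\circ\phi^{-1}\in G_\phi$ and $z\in\mathbb{D}$. Since $\phi|_{\mathbb{D}}$ is a bijection of $\mathbb{D}$, we have $\phi^{-1}(z)\in\mathbb{D}$, and because $g(\mathbb{D})\subset\mathbb{D}$ (as $G\in\Psi$) the whole orbit $g^k(\phi^{-1}(z))$ stays in $\mathbb{D}$; this lets the intermediate cancellations $\phi^{-1}\circ\phi=\mathrm{id}$ take place at points of $\mathbb{D}$, so by induction $h^n(z)=\phi(g^n(\phi^{-1}(z)))$.

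Next I would take limits. Because $G\in\Psi$, $g^n(\phi^{-1}(z))\to z_g$ for some $z_g\in\overline{\mathbb{D}}$, and since $\phi$ is continuous on all of $\widehat{\mathbb{C}}$ — this is used precisely at the possibly-boundary limit $z_g$ — we get $h^n(z)=\phi(g^n(\phi^{-1}(z)))\to\phi(z_g)$, a value independent of $z$. Hence $z_h=\phi(z_g)$, which yields $DW(G_\phi)=\phi(DW(G))$; in particular $G_\phi$ satisfies the $\Psi$-convergence condition, so calling it absorbing is meaningful.

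The main step is to show that $\phi$ preserves the interior/boundary dichotomy, namely $\phi(\mathbb{D})=\mathbb{D}$ and $\phi(\partial\mathbb{D})=\partial\mathbb{D}$. The inclusion $\phi(\mathbb{D})\subseteq\mathbb{D}$ is part of the hypothesis; compactness of $\overline{\mathbb{D}}$ and continuity give $\phi(\overline{\mathbb{D}})=\overline{\phi(\mathbb{D})}=\overline{\mathbb{D}}$, whence $\partial\mathbb{D}\subseteq\phi(\partial\mathbb{D})$. I expect the hard part to be the reverse inclusion $\phi(\partial\mathbb{D})\subseteq\partial\mathbb{D}$, equivalently $\phi(\partial\mathbb{D})\cap\mathbb{D}=\emptyset$, since the hypotheses only give bijectivity on $\mathbb{D}$. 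I would handle it with invariance of domain: the continuous injection $\phi|_{\mathbb{D}}\colon\mathbb{D}\to\mathbb{D}$ between open subsets of $\mathbb{C}$ is a homeomorphism onto $\mathbb{D}$, so $(\phi|_{\mathbb{D}})^{-1}$ is continuous. If some $w\in\partial\mathbb{D}$ had $\phi(w)=p\in\mathbb{D}$, choosing $z_n\in\mathbb{D}$ with $z_n\to w$ gives $\phi(z_n)\to p$, and continuity of $(\phi|_{\mathbb{D}})^{-1}$ forces $z_n=(\phi|_{\mathbb{D}})^{-1}(\phi(z_n))\to(\phi|_{\mathbb{D}})^{-1}(p)\in\mathbb{D}$, contradicting $z_n\to w\in\partial\mathbb{D}$.

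Finally I would assemble the equivalence. By definition $G$ is absorbing iff $DW(G)\subset\mathbb{D}$. If $DW(G)\subset\mathbb{D}$, then $DW(G_\phi)=\phi(DW(G))\subseteq\phi(\mathbb{D})=\mathbb{D}$, so $G_\phi$ is absorbing. Conversely, if $DW(G)\not\subset\mathbb{D}$, pick $z_0\in DW(G)\cap\partial\mathbb{D}$; by the dichotomy $\phi(z_0)\in\partial\mathbb{D}$, so $\phi(z_0)\in DW(G_\phi)$ lies outside $\mathbb{D}$ and $G_\phi$ is not absorbing. The contrapositive supplies the remaining implication, completing the proof. Everything except the boundary-preservation step is routine conjugation of dynamics combined with continuity of $\phi$.
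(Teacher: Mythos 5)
Your proof is correct, and its core identity $DW(G_\phi)=\phi(DW(G))$ is exactly the paper's main step; the careful induction showing $h^n(z)=\phi(g^n(\phi^{-1}(z)))$ with all cancellations occurring at points of $\mathbb{D}$, and the use of continuity of $\phi$ at the (possibly boundary) limit $z_g$, match the paper's argument for why $G_\phi\in\Psi$ and why the forward implication holds. Where you genuinely diverge is the converse. The paper disposes of it with the single sentence that it ``follows in a similar approach considering $DW(G)=\phi^{-1}(DW(G_\phi))$,'' but this is thin: $\phi$ is only assumed bijective on $\mathbb{D}$, so $\phi^{-1}$ of a subset of $\mathbb{D}$ is either the restricted inverse $(\phi\lvert_{\mathbb{D}})^{-1}$ --- in which case the identity $DW(G)=\phi^{-1}(DW(G_\phi))$ presupposes $DW(G)\subset\mathbb{D}$, the very thing to be proved --- or the full preimage, in which case one still must rule out $\phi(w)\in\mathbb{D}$ for some $w\in DW(G)\cap\partial\mathbb{D}$. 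Your boundary-preservation lemma, $\phi(\partial\mathbb{D})\subset\partial\mathbb{D}$, proved via invariance of domain (so that $(\phi\lvert_{\mathbb{D}})^{-1}$ is continuous and a boundary point mapping into $\mathbb{D}$ would force its approximating interior sequence to converge inside $\mathbb{D}$), is precisely the missing ingredient; it makes the contrapositive of the converse immediate. So your write-up is not merely a rederivation: it supplies a step the paper needs but does not spell out, at the modest cost of invoking invariance of domain (or any equivalent properness argument for the continuous bijection $\phi\lvert_{\mathbb{D}}:\mathbb{D}\to\mathbb{D}$).
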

\begin{proof}
    We have $G\in\Psi$. First, we show that $G_\phi\in\Psi$ also. Let $h\in G_\phi$. So there exists a $g\in G$ such that $h=\phi\circ g\circ\phi^{-1}$. So $h^n(z)=\phi\circ g^n\circ\phi^{-1}$ on $\mathbb{D}$. Now as $g\in G$, $g^n(z)\to z_g\in\overline{\mathbb{D}}$ as $n\to\infty$ for all $z\in\mathbb{D}$ for some $z_g$. So $h^n(z)\to\phi(z_g)$ as $n\to\infty$ for all $z\in\mathbb{D}$. We have to show that $\phi(z_g)\in\overline{\mathbb{D}}$. Let us assume $\phi(z_g)\in\widehat{\mathbb{C}}\setminus \overline{\mathbb{D}}$. Since $\widehat{\mathbb{C}}\setminus \overline{\mathbb{D}}$ is an open set, $\phi(z_g)$ has a neighbourhood $U$ such that $U\subset \widehat{\mathbb{C}}\setminus \overline{\mathbb{D}}$. Now let $z^\prime_n=g^n(z^\prime)$ for some $z^\prime\in \mathbb{D}$ and $n\in\mathbb{N}$. So, $\{z^\prime_n\}\subset \mathbb{D}$ converges to $z_g$. By continuity of $\phi$, $\phi(z^\prime_n)\to \phi(z_g)$. So any neighbourhood of $\phi(z_g)$ must contain elements of $\{\phi(z^\prime_n)\}$. But $U$ cannot contain any element of $\{\phi(z^\prime_n)\}$ as $\phi(z^\prime_n)\in\mathbb{D}$ for all $n\in\mathbb{N}$. This is a contradiction. So, $\phi(z_g)\in\overline{\mathbb{D}}$. Hence $G_\phi\in\Psi$ as $h$ is chosen arbitrarily. Now let $G$ be absorbing. So $DW(G)\subset\mathbb{D}$. From the proof so far, it is clear that $DW(G_\phi)=\phi(DW(G))$. Since $\phi\lvert_\mathbb{D}$ is a bijection in $\mathbb{D}\to\mathbb{D}$ and $DW(G)\subset\mathbb{D}$, $\phi(DW(G))\subset \mathbb{D}$. So $DW(G_\phi)\subset\mathbb{D}$ and $G_\phi$ is absorbing. The converse part follows in a similar approach considering $DW(G)=\phi^{-1}(DW(G_\phi))$.
\end{proof}
\begin{cor}
    Let $G\in\Psi$ and $G_{\phi}$ be defined as in Theorem \ref{th42}. Then $G$ is dispersing if and only if $G_\phi$ is dispersing.
\end{cor}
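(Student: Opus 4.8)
The plan is to reduce the statement to the identity $DW(G_\phi)=\phi(DW(G))$, which is already established in the course of proving Theorem \ref{th42}, together with one topological fact about how $\phi$ treats the boundary circle. Throughout I will use that for a semigroup in $\Psi$ the Denjoy-Wolff like set is a nonempty subset of $\overline{\mathbb{D}}$, so that being dispersing, i.e. $DW(G)\subset\partial\mathbb{D}$, is equivalent to $DW(G)\cap\mathbb{D}=\emptyset$; the same remark applies to $G_\phi$ once we know $G_\phi\in\Psi$, which is part of Theorem \ref{th42}.

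First I would record the key geometric fact: $\phi(\partial\mathbb{D})\cap\mathbb{D}=\emptyset$. Since $\phi|_{\mathbb{D}}$ is a continuous bijection of the open set $\mathbb{D}\subset\mathbb{R}^2$ onto $\mathbb{D}$, by invariance of domain it is a homeomorphism and in particular an open map. If some $w\in\partial\mathbb{D}$ had $\phi(w)=v\in\mathbb{D}$, pick the unique $z_0\in\mathbb{D}$ with $\phi(z_0)=v$ and a neighbourhood $V$ of $z_0$ with $\overline{V}\subset\mathbb{D}$; then $\phi(V)$ is a neighbourhood of $v$, and for a sequence $z_n\to w$ in $\mathbb{D}$ we have $\phi(z_n)\to v$, so $\phi(z_n)\in\phi(V)$ for large $n$. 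Injectivity of $\phi$ on $\mathbb{D}$ then forces $z_n\in V$ eventually, contradicting $z_n\to w\notin\overline{V}$. This is the step I expect to be the main obstacle, since it is the only point where the mere set-theoretic bijectivity of $\phi|_{\mathbb{D}}$ must be promoted to genuine topological control near $\partial\mathbb{D}$.

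Next I would prove the forward implication. Assume $G$ is dispersing, so $DW(G)\subset\partial\mathbb{D}$. Using $DW(G_\phi)=\phi(DW(G))$ we get $DW(G_\phi)\subset\phi(\partial\mathbb{D})$, which is disjoint from $\mathbb{D}$ by the previous step. Since $G_\phi\in\Psi$ gives $DW(G_\phi)\subset\overline{\mathbb{D}}$, it follows that $DW(G_\phi)\subset\overline{\mathbb{D}}\setminus\mathbb{D}=\partial\mathbb{D}$, i.e. $G_\phi$ is dispersing.

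Finally, for the converse I would argue directly, needing only that $\phi$ carries $\mathbb{D}$ into $\mathbb{D}$: assume $G_\phi$ is dispersing, so $\phi(DW(G))=DW(G_\phi)\subset\partial\mathbb{D}$. If $G$ were not dispersing there would be a point $z_0\in DW(G)\cap\mathbb{D}$; but then $\phi(z_0)\in\phi(\mathbb{D})=\mathbb{D}$, contradicting $\phi(z_0)\in DW(G_\phi)\subset\partial\mathbb{D}$. Hence $DW(G)\subset\partial\mathbb{D}$ and $G$ is dispersing. Since Theorem \ref{th42} already settles the absorbing case and the three classes absorbing/dispersing/hybrid are determined entirely by where $DW$ sits relative to $\mathbb{D}$ and $\partial\mathbb{D}$, the same mechanism—the equality $DW(G_\phi)=\phi(DW(G))$ combined with $\phi(\mathbb{D})=\mathbb{D}$ and $\phi(\partial\mathbb{D})\cap\mathbb{D}=\emptyset$—pins down the dispersing class exactly as it pins down the absorbing one.
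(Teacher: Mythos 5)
Your proof is correct, but it takes a different route from the paper's in the forward direction. The paper first invokes Theorem \ref{th42} to conclude that $G_\phi$ is not absorbing, and then rules out the hybrid case by contradiction: an interior Denjoy-Wolff point $z_h\in DW(G_\phi)\cap\mathbb{D}$ would pull back under $\phi^{-1}$ to the Denjoy-Wolff point $\phi^{-1}(z_h)\in\mathbb{D}$ of some $g\in G$, contradicting that $G$ is dispersing. That argument only ever needs $\phi^{-1}(\mathbb{D})\subset\mathbb{D}$, i.e.\ it works with preimages; you instead push forward via $DW(G_\phi)=\phi(DW(G))$, and therefore must control the image $\phi(\partial\mathbb{D})$. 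Your invariance-of-domain argument that $\phi(\partial\mathbb{D})\cap\mathbb{D}=\emptyset$ is the extra ingredient the paper never states (and, as you note, it is the only place where the set-theoretic hypothesis on $\phi\lvert_{\mathbb{D}}$ has to be upgraded to topological control near the boundary). What each approach buys: yours is more self-contained and does not rely on global injectivity of $\phi$ beyond what is implicit in writing $\phi^{-1}$ in the definition of $G_\phi$, while the paper's is shorter because it recycles Theorem \ref{th42} and works entirely with the inverse map. Your converse is essentially the paper's hybrid-exclusion argument run in the opposite direction, and matches the paper's unwritten ``similar approach.''
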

\begin{proof}
    Let $G$ be dispersing. Due to Theorem \ref{th42}, $G_\phi$ is not absorbing. So either $G_\phi$ is dispersing or hybrid. If $G_\phi$ is hybrid, then there exists at least one $h\in G_\phi$ such that $h^n(z)\to z_h\in\mathbb{D}$. So from the proof of Theorem \ref{th42}, it is clear that there exists a $g\in G$ such that $h=\phi\circ g\circ\phi^{-1}$ and $g^n(z)\to \phi^{-1}(z_h)$ as $n\to \infty$ and $z\in\mathbb{D}$. But as $\phi\lvert_\mathbb{D}$ is a bijection in $\mathbb{D}\to\mathbb{D}$, $\phi^{-1}(z_h)\in \mathbb{D}$. This contradicts that $G$ is dispersing. So $G_\phi$ can not be hybrid and hence must be dispersing. The converse part follows a similar approach.
\end{proof}
 \section*{Conclusion}
 In this paper, we discuss various properties of the Denjoy-Wolff like set for a rational semigroup $G$. In future, we wish to classify rational semigroups with the help of this set. The classification of semigroups of class $\Psi$ can be investigated on a more detailed account which can offer better insight into the dynamics of some special types of rational maps. 

\subsection*{Acknowledgment}
The second author sincerely acknowledges the financial support rendered by the National Board for Higher Mathematics, Department of Atomic Energy, Government of India sponsored project with Grant No. \\02011/17/2022/NBHM(R.P)/R\&D II/9661 dated: 22.07.2022. The third author acknowledges the financial support received from
National Board for Higher Mathematics, Government of India, Post Doctoral
Fellowship (No. 0204/61/2017/R\&D II/15334).
\subsection*{Statements and Declarations}
 No potential competing interest was reported by the authors.
 \subsection*{Data availability statement}
 Data sharing not applicable to this article as no datasets were generated or analysed during the current study.
\bibliographystyle{acm}
\bibliography{bjourdoc}

\end{document}